\newcommand{\firststirlingnumber}[2]{\ensuremath{\genfrac{[}{]}{0pt}{}{#1}{#2}}}
\newcommand{\Sym}                 [1] {\ensuremath{\mathfrak{S}_{#1}}}
\newcommand{\Symdist}                 [2] {\ensuremath{\mathfrak{S}_{#1, #2}}}
\DeclareMathOperator*{\argmax}{argmax}
\Crefname{conjecture}{Conjecture}{Conjectures}
\Crefname{observation}{Observation}{Observations}
\title{Asymptotic normality and combinatorial aspects of the prefix exchange distance distribution}
\author{Simona Grusea}
\ead{grusea@insa-toulouse.fr}
\address{Institut de Math\'ematiques de Toulouse, INSA de Toulouse, Universit\'e de Toulouse, France}
\author{Anthony Labarre}
\ead{Anthony.Labarre@u-pem.fr}
\address{Universit\'e Paris-Est, LIGM UMR CNRS 8049 \\
Universit\'e Paris-Est Marne-la-Vall\'ee\\
5 boulevard Descartes \\
77420 Champs-sur-Marne, France.\\
}
\begin{document}

\begin{abstract}
The prefix exchange distance of a permutation is the minimum number of exchanges involving the leftmost element that sorts the permutation. We give new combinatorial proofs of known results on the distribution of the prefix exchange distance for a random uniform permutation. We also obtain expressions for the mean and the variance of this distribution, and finally, we show that the normalised prefix exchange distribution converges in distribution to the standard normal distribution.
\end{abstract}

\begin{keyword}
star poset \sep Whitney numbers \sep combinatorial proofs \sep permutation \sep distance \sep prefix exchange \sep distribution \sep asymptotic normality

\MSC 68R05\sep 05A05 \sep 05A15 \sep 60F05
\end{keyword}

\maketitle

\section{Introduction}

An ever-growing body of research has been devoted to the study of various measures of disorder on permutations, with the intention of expressing how many elementary operations (whose type may vary but which are fixed beforehand) they should undergo in order to become sorted. One of the earliest examples of such a measure is the \emph{Cayley distance}, which corresponds to the minimum number of transpositions that must be applied to a permutation in order to obtain the identity permutation. This distance is easily expressed in terms of the number of \emph{cycles} of the permutation~\cite{Cayley1849}, and the \emph{signless Stirling numbers of the first kind} can be used to characterise exactly the distribution of the Cayley distance --- i.e., the number of permutations of $n$ elements with Cayley distance $k$. Motivations for studying these distances and their distributions outside pure mathematical fields include the study of sorting algorithms~\cite{Estivill-Castro1992}, genome comparison~\cite{fertin-combinatorics}, and the design of interconnection networks~\cite{Lakshmivarahan1993}.

We focus in this paper on the \emph{prefix exchange} operation,  a restricted kind of transposition that swaps any element of a permutation with its first element. This operation was introduced by \citet{Akers1989}, who also gave a formula for computing the associated \emph{prefix exchange distance}, i.e., the minimum number of prefix exchanges required to transform a given permutation into the identity permutation. \citet{portier-whitney} later succeeded in obtaining the generating function of the corresponding distribution, which they then used to derive an explicit formula (with subsequent corrections by \citet{shen-whitney}) as well as recurrence formulas for computing the so-called ``Whitney numbers of the second kind for the star poset'', i.e., the number of permutations of size $n$ with prefix exchange distance $k$ (see \citet{oeis-pexc} for a table with the first few terms).

We revisit in this paper the results obtained by \citet{portier-whitney} by taking the opposite direction: we first obtain new proofs for their exact and recurrence formulas, and then use these formulas to recover their expression for the generating function. Our proofs are purely combinatorial, a desirable property since such proofs are often simpler in addition to providing new insight into the underlying objects~\cite{Bona2012,Stanley2012}. 

We then proceed to obtaining the mean and the variance of the distribution, and finally, we examine the behaviour of this distribution when $n$ tends to infinity: in particular, we show that the normalised prefix exchange distribution converges in distribution to the standard normal distribution. Our result enriches the family of combinatorial sequences which were previously shown to behave asymptotically normally, like the (signless) Stirling numbers of first and second kind, the Eulerian numbers, the adjacent transposition distance distribution and the related distribution of the number of inversions in a permutation --- for precise definitions of these sequences and asymptotic normality results, as well as other examples, see \cite{David, Bender, Flajolet}.

\section{Background and known results}

We recall some basic notions and notation (see e.g. \citet{Bona2012}) that will be useful throughout the text.

\subsection{Permutations and cycles}

For $n \geq 1$, we let $\Sym{n}$ denote the \emph{symmetric group}, i.e., the set of all  permutations of $\{1,2,\ldots,n\}$ together with the usual function composition operation $\circ$ applied from right to left. We view permutations as sequences and denote them using lower case Greek letters, i.e., $\pi=\langle\pi_1\ \pi_2\ \cdots\ \pi_n\rangle$, where  $\pi_i=\pi(i)$ for $1\leq i\leq n$. We will sometimes find it convenient to \emph{reduce} permutations in the following sense.

\begin{definition}
    \cite{DBLP:series/eatcs/Kitaev11} The \emph{reduced form} of a permutation $\sigma$ of a set $\{j_1, j_2, \ldots, j_r\}$ with $j_1<j_2<\cdots<j_r$ is the permutation $red(\sigma)\in\Sym{r}$ obtained by replacing $j_i$ with $i$ in $\sigma$ for all $i$ such that $1\leq i\leq r$.
\end{definition}

As is well-known, every permutation $\pi$ decomposes in a single way into disjoint cycles (up to the ordering of cycles and of elements within each cycle).  For instance, when $\pi=\langle 4\ 1\ 6\ 2\ 5\ 7\ 3\rangle$, the \emph{disjoint cycle decomposition} is $\pi=(1,4,2)(3,6,7)(5)$ (notice the parentheses and the commas). We use $c_1(\pi)$ to denote the number of cycles of length $1$, or \emph{fixed points}, of $\pi$, and $c_{\geq 2}(\pi)$ to denote the number of cycles of length at least $2$ of $\pi$. 

Let $dcd(\pi)$ denote the disjoint cycle decomposition of $\pi$. It will sometimes be convenient to abuse notation by writing, for some permutation $\pi\in\Sym{n}$, $\sigma=dcd(\pi)\cup(n+1)$, to express the fact that the disjoint cycle decomposition of $\pi$ and $\sigma$ differ only by the fixed point $\sigma_{n+1}=n+1$, which does not exist in $\pi$.

Recall that, for $0 \leq k \leq n$, the \emph{signless Stirling number of the first kind} $\firststirlingnumber{n}{k}$ counts the number of permutations of $n$ elements with $k$ cycles, with the convention that $\firststirlingnumber{n}{0}=0$ for $n > 1$ and
$\firststirlingnumber{0}{0}=1$. 
These numbers are well-known to appear in the following series expansion of the \emph{ascending factorial}: 
\begin{equation}\label{eqn:ascending-factorial}
x^{\overline{n}} =  x(x+1)\cdots (x+n-1)= \sum_{k=0}^{n} \firststirlingnumber{n}{k} x^k.
\end{equation}
The \emph{signed Stirling number of the first
kind} is $s(n,k)  =(-1)^{n-k} \firststirlingnumber{n}{k}$.

\subsection{Prefix exchanges}

For every $i=2,3,\ldots,n$, the \emph{prefix exchange} $(1,i)$ applied to a permutation $\pi$ in $\Sym{n}$ transforms $\pi$ into $\pi\circ(1,i)$ by swapping elements $\pi_1$ and $\pi_i$. The \textit{prefix exchange distance} of $\pi$, denoted by $pexc(\pi)$, is the minimum number of prefix exchanges needed to sort the permutation $\pi$, i.e., to transform it into the identity permutation $\iota=\langle 1\ 2\ \cdots\ n\rangle$. 
\citet{akers-star} proved the following formula for computing the prefix exchange
distance:

\begin{theorem}\label{formula_dist}
  \cite{akers-star}
  The prefix exchange distance of $\pi$ in $\Sym{n}$ is equal to
\begin{equation}\label{form}
pexc(\pi)=n+c_{\geq 2}(\pi)-c_1(\pi)-\left\{
    \begin{array}{ll}
      0 & \mbox{if } \pi_1= 1, \\
      2 & \mbox{otherwise}.
    \end{array}
  \right.
\end{equation}
\end{theorem}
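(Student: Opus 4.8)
The plan is to prove the two inequalities $pexc(\pi)\le R(\pi)$ and $pexc(\pi)\ge R(\pi)$ separately, where $R(\pi)$ denotes the right-hand side of~\eqref{form}, by treating $R$ as a potential function on $\Sym{n}$ and studying how it varies under a single prefix exchange. First I would record the base case: $R(\iota)=0$, since $\iota$ has $n$ fixed points, no cycle of length at least two, and satisfies $\iota_1=1$. Since each exchange replaces the current permutation $\sigma$ by $\sigma\circ(1,i)$, any sorting sequence is a path from $\pi$ to $\iota$ in which every step modifies only the values in positions $1$ and $i$; hence the whole argument reduces to understanding the quantity $R(\sigma\circ(1,i))-R(\sigma)$.

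For the lower bound I would prove the key claim that $R(\sigma\circ(1,i))\ge R(\sigma)-1$ for every $\sigma$ and every $i$; telescoping this along any sorting sequence of length $L$ then yields $R(\pi)=R(\pi)-R(\iota)\le L$, i.e. $pexc(\pi)\ge R(\pi)$. The claim rests on three elementary facts about the exchange $\sigma\mapsto\sigma\circ(1,i)$: it changes the total number of cycles by exactly $\pm1$ (a split when positions $1$ and $i$ lie in the same cycle of $\sigma$, a merge otherwise); it can alter the fixed-point count only at positions $1$ and $i$, so that $c_1$ changes by at most two; and the indicator $[\sigma_1\neq1]$ changes by at most one. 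I would then organise the case analysis according to how this indicator changes, using in each case the split/merge dichotomy to express $\Delta c_{\geq 2}$ in terms of $\Delta c_1$, and check that $R$ never drops by more than one.

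For the upper bound I would exhibit an explicit sorting procedure and verify that every one of its moves decreases $R$ by exactly one, so that the procedure reaches $\iota$ in precisely $R(\pi)$ steps, giving $pexc(\pi)\le R(\pi)$. As long as $\pi\neq\iota$ the procedure distinguishes two situations: if $\pi_1=j\neq1$, applying $(1,j)$ places the value $j$ in position $j$ (a \emph{productive} move); if $\pi_1=1$ but $\pi\neq\iota$, some cycle of length at least two exists, and applying $(1,i)$ for a position $i$ inside it merges the fixed point $1$ into that cycle (a \emph{priming} move), after which a productive move becomes available. Summing the resulting cost per cycle — each cycle of length $\ell$ not containing the symbol $1$ costs $\ell+1$ exchanges, whereas the cycle containing $1$, sorted first when $\pi_1\neq1$, costs only its length minus one — reproduces exactly $R(\pi)$, which also confirms termination.

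The main obstacle is the case analysis for the inequality $R(\sigma\circ(1,i))\ge R(\sigma)-1$, because the three quantities $c_1$, $c_{\geq 2}$ and $[\sigma_1\neq1]$ interact and a naive bound seems to allow a drop of $2$ or more. The delicate point is the merge case in which $\sigma_1\neq1$ and $\sigma_1$ remains different from $1$ after the exchange: here one must rule out simultaneously merging two cycles and creating a fresh fixed point at position $i$. This is exactly where I would use the structural observation that $\sigma_1=i$ forces positions $1$ and $i$ into the same cycle of $\sigma$, so that $\sigma_1=i$ can only produce a split and never the feared merge; this prevents $\Delta c_1$ from being positive in the merge case and keeps $\Delta R\ge-1$.
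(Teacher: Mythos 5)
You should first be aware that the paper contains no proof of this statement to compare against: \Cref{formula_dist} is quoted from Akers and Krishnamurthy \cite{akers-star}, and \eqref{form} is used throughout the paper as a black box. Your argument therefore stands on its own, and it is correct. Writing $R(\sigma)=n+c_{\geq 2}(\sigma)-c_1(\sigma)-2\,[\sigma_1\neq 1]$ and using $c_{\geq 2}=c-c_1$ (where $c$ is the total number of cycles), one step changes the potential by $\Delta R=\Delta c-2\Delta c_1-2\Delta[\sigma_1\neq 1]$ with $\Delta c=\pm 1$, and a full check of your cases confirms that in fact $\Delta R\in\{-1,+1\}$ always, so telescoping along any sorting sequence gives $pexc(\pi)\geq R(\pi)$. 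You also isolate exactly the right danger: a merge ($\Delta c=-1$) that simultaneously creates a fixed point with the indicator unchanged would give $\Delta R\leq -3$, and your observation that $\sigma_1=i$ forces $1$ and $i$ into one cycle (hence a split) rules it out. One small completeness point: the same care is needed at position $1$, not only at position $i$ — a fresh fixed point there would require $\sigma_i=1$, which likewise forces $1$ and $i$ into the same cycle, hence a split; and in a merge with $\sigma_1\neq 1$ the indicator cannot drop for the same reason. All three exclusions rest on the single fact you state, so your framework handles them, but they should all appear in the written case analysis.

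The upper bound is equally sound: the productive move $(1,\sigma_1)$ is automatically a split by your observation and satisfies $\Delta R=-1$ whether the cycle through $1$ has length two or more, the priming move satisfies $\Delta R=-1$, and your per-cycle accounting (cost $\ell+1$ for each nontrivial cycle avoiding $1$, cost $\ell_0-1$ for the cycle through $1$) does sum to $R(\pi)$ in both the $\pi_1=1$ and $\pi_1\neq 1$ cases. Alternatively, termination follows at once from $R\geq 0$ with equality only at $\iota$, which spares the accounting. Compared with the paper's bare citation, your route buys a self-contained, elementary proof and a by-product the citation does not make visible: since every prefix exchange changes $R$ by exactly $\pm 1$, one gets the parity statement $pexc(\pi)\equiv n+c_{\geq 2}(\pi)-c_1(\pi)\pmod 2$ for free.
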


\citet{akers-star} referred to the Cayley graph of $\Sym{n}$ generated by prefix exchanges as the ``$n$-star graph''. 
They proved that the diameter of that graph, or equivalently the largest value that the distance can reach, is $\left\lfloor 3(n-1)/2\right\rfloor$.

Let $n \geq 1$ be fixed. For $k \geq 0$, 
we let $W_{n,k}$ denote the number of permutations in $\Sym{n}$ which are at prefix exchange distance $k$ from the identity permutation. These numbers are called in the literature ``Whitney numbers of the second kind for the star poset'' or ``surface areas for the star graph''.
An explicit formula for these numbers was first given by \citet{portier-whitney}, and later corrected by \citet{shen-whitney}: 

\begin{theorem}\label{formula_W}
 \cite{shen-whitney} The Whitney numbers of the second kind for the star poset are given as follows. Let $n\geq 1$ and $k$ such that $0\leq k\leq \left\lfloor 3(n-1)/2 \right\rfloor$ and denote, for $0 \leq i \leq \min(n-1,k+1)$:
$$ T_i = \max\left\{0,\left\lceil\frac{k-2i}{2}\right\rceil\right\}, \  S_i = \min\left\{n-1-i,\left\lfloor\frac{k+1-i}{2}\right\rfloor\right\}.
$$
With these notation, we have:
$$W_{n,k}=\sum_{i=0}^{\min(n-1,k+1)}\sum_{t=T_i}^{S_i}\binom{n-1}{i}\binom{n-1-i}{t}
s(i+1,k-i+1-2t)(-1)^{k+2-t}.$$
\end{theorem}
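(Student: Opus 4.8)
The plan is to start from the distance formula of Theorem~\ref{formula_dist} and translate the condition $pexc(\pi)=k$ into a condition on the cycle type of $\pi$, then count permutations with a prescribed cycle type. Writing $f=c_1(\pi)$ and $m=c_{\geq 2}(\pi)$, the formula reads $pexc(\pi)=n+m-f$ when $\pi_1=1$ and $pexc(\pi)=n+m-f-2$ otherwise, so I would split the count according to whether $1$ is a fixed point. In the first case $1$ is one of the $f$ fixed points and the remaining $n-1$ elements carry a permutation with $f-1$ fixed points and $m$ nontrivial cycles; in the second case the $f$ fixed points are chosen among $\{2,\ldots,n\}$ and $1$ lies in one of the $m$ nontrivial cycles. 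In both cases the number of ways to place the fixed points is a single binomial coefficient, and the nontrivial part is counted by the number of fixed-point-free permutations of a given size with a prescribed number of cycles.

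The key intermediate quantity is therefore $g(r,m)$, the number of permutations of an $r$-set into exactly $m$ cycles, each of length at least $2$. I would express it in terms of ordinary Stirling numbers by an inclusion--exclusion over the set of elements declared to be fixed, which yields
\[
g(r,m)=\sum_{j\geq 0}(-1)^j\binom{r}{j}\firststirlingnumber{r-j}{m-j};
\]
equivalently this is a coefficient extraction from the exponential generating function $(1-x)^{-u}e^{-ux}$, in which the factor $e^{-ux}$ removes the fixed points from each cycle's contribution. Substituting this expansion into each of the two cases and eliminating the summation variable tied to the number of nontrivial cycles turns each case into a double sum over a ``core size'' and the inclusion--exclusion index $t$, whose summand is a product of two binomial coefficients, a signless Stirling number $\firststirlingnumber{\cdot}{\cdot}$, and the sign $(-1)^t$.

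The decisive and most delicate step is to merge the two case sums into the single expression claimed in the statement. After reindexing so that both cases feed the same Stirling number $\firststirlingnumber{i+1}{k-i+1-2t}$ (the shift from $i$ to $i+1$ reflecting the special role played by the element $1$), the two contributions to a fixed $(i,t)$ term combine through an elementary binomial identity, namely
\[
\binom{n-1}{i}\binom{n-1-i}{t}=\binom{n-1}{i+t}\binom{i+1+t}{t}-\binom{n-1}{t-1}\binom{n-t}{i+1},
\]
which I would verify by a short direct manipulation of factorials. I expect this merging, together with the bookkeeping needed to pin down the exact summation bounds, to be the main obstacle, since the two cases do not combine by an obvious bijection but only after this algebraic cancellation.

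Finally I would tidy up the elementary but error-prone details. Converting the signless Stirling numbers to the signed ones via $s(a,b)=(-1)^{a-b}\firststirlingnumber{a}{b}$ reorganises the accumulated signs into exactly the factor $(-1)^{k+2-t}$ multiplying $s(i+1,k-i+1-2t)$. The ranges then follow from the constraints $t\geq 0$ and $t\leq n-1-i$ imposed by the binomial coefficients, together with the support condition $1\leq k-i+1-2t\leq i+1$ of $\firststirlingnumber{i+1}{k-i+1-2t}$: the upper support inequality gives $T_i=\max\{0,\lceil(k-2i)/2\rceil\}$ and the lower one gives $S_i=\min\{n-1-i,\lfloor(k+1-i)/2\rfloor\}$, the slightly loose upper value being harmless because the single extra term it may admit carries the vanishing factor $\firststirlingnumber{i+1}{0}$. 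The outer range $i\leq\min(n-1,k+1)$ likewise comes from $\binom{n-1}{i}$ and from nonnegativity of the lower Stirling index.
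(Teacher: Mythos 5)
Your proposal is correct and takes essentially the same route as the paper: split on whether $\pi_1=1$, count each case by choosing the fixed points and using the derangements-with-$k$-cycles formula (the paper's \Cref{lemma:number-of-derangements-with-k-cycles}), then cancel the two cases against each other via a binomial identity before converting to signed Stirling numbers. The only organisational difference is that the paper cancels whole sums (it splits $W^{(2)}_{n,k}$ into $S_1+S_2$ and shows $S_2=-W^{(1)}_{n,k}$) and reindexes afterwards, whereas you reindex first and merge termwise; your identity $\binom{n-1}{i}\binom{n-1-i}{t}=\binom{n-1}{i+t}\binom{i+1+t}{t}-\binom{n-1}{t-1}\binom{n-t}{i+1}$ is valid and is equivalent, via \Cref{eqn:handy-rule}, to the decomposition the paper uses.
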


Using different approaches, \citet{imani-article} and \citet{cheng-whitney} give alternative explicit formulas for $W_{n,k}$. The following recurrence relations are also known (see \citet{portier-whitney} for the first one and \citet{qiu-some} for the second and the third):

\begin{theorem}\label{thm:recurrences}
 The Whitney numbers of the second kind for the star poset obey the following recurrence relations: for $n\geq 1$ and $3\leq k\leq \left\lfloor 3(n-1)/2\right\rfloor$, we have:
\begin{equation} \label{whitney-recurrence-i}
W_{n,k}=W_{n-1,k}+(n-1)W_{n-1,k-1}-(n-2)W_{n-2,k-1}+(n-2)W_{n-2,k-3},
\end{equation}
\begin{equation} \label{whitney-recurrence-ii}
 W_{n, k} = (n-1)W_{n-1, k-1} + \sum_{j=1}^{n-2} j W_{j, k-3},
\end{equation}
with $W_{n,0}=1, W_{n,1}=n-1$ and $W_{n,2}=(n-1)(n-2).$
We also have, for $n\geq 1$ and $0\leq k\leq \left\lfloor\frac{3(n-1)}{2}\right\rfloor$:
\begin{equation} \label{whitney-recurrence-iii}
 W_{n+k+1, k} = \sum_{i=1}^{k+1} (-1)^{i+1} \binom{k+1}{i} W_{n+k+1-i, k}.
\end{equation}
\end{theorem}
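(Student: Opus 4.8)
My plan is to argue directly from the distance formula of Theorem~\ref{formula_dist}, not from the explicit expression in Theorem~\ref{formula_W}. The first step is to rewrite \eqref{form} in the \emph{cycle-weighted} form
\[
  pexc(\pi)=\sum_{C}\bigl(|C|+1\bigr)-2\,[\,\pi_1\neq 1\,],
\]
where the sum ranges over the nontrivial cycles $C$ of $\pi$ and $[\,\pi_1\neq 1\,]$ equals $1$ if $\pi_1\neq 1$ and $0$ otherwise; this is immediate from \eqref{form} on substituting $c_1(\pi)=n-\sum_{C}|C|$. In this form each nontrivial cycle contributes its length plus one, with a global correction of $-2$ exactly when $1$ is not a fixed point, and this is the statistic I will track through all three arguments.

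For \eqref{whitney-recurrence-i} I would partition $\Sym{n}$ at distance $k$ according to whether $n$ is a fixed point. If it is, deleting $n$ and reducing gives a distance-preserving bijection onto $\Sym{n-1}$, accounting for the term $W_{n-1,k}$. Otherwise I use the cycle-insertion bijection between $\{\pi\in\Sym{n}:n\text{ not fixed}\}$ and pairs $(\sigma,j)$ with $\sigma\in\Sym{n-1}$ and $j\in\{1,\ldots,n-1\}$, where $\pi$ is obtained by inserting $n$ immediately after $j$ in the disjoint cycle decomposition of $\sigma$. A short case analysis on the cycle-weighted distance shows that this insertion raises the distance by $3$ when $j$ is a fixed point of $\sigma$ different from $1$, and by $1$ in every other case. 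Writing $F(\sigma)$ for the number of fixed points of $\sigma$ other than $1$, the non-fixed-$n$ permutations at distance $k$ therefore number $\sum_{pexc(\sigma)=k-1}\bigl((n-1)-F(\sigma)\bigr)+\sum_{pexc(\sigma)=k-3}F(\sigma)$, which already exhibits the term $(n-1)W_{n-1,k-1}$.

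It remains to evaluate $\sum_{pexc(\sigma)=d}F(\sigma)$ for $d=k-1$ and $d=k-3$, and this is the heart of the argument. I would establish the identity $\sum_{\sigma\in\Sym{m},\,pexc(\sigma)=d}F(\sigma)=(m-1)\,W_{m-1,d}$ by double counting: $\sum_\sigma F(\sigma)=\sum_{j=2}^{m}\#\{\sigma:pexc(\sigma)=d,\ \sigma(j)=j\}$, and for each fixed $j\geq 2$ the deletion-and-reduction map is a distance-preserving bijection onto $\Sym{m-1}$ (deleting a fixed point different from $1$ changes neither the nontrivial cycles nor the status of the minimal element, both of which alone determine the cycle-weighted distance), so each inner count equals $W_{m-1,d}$. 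Substituting with $m=n-1$ turns the two $F$-sums into $-(n-2)W_{n-2,k-1}+(n-2)W_{n-2,k-3}$, which completes \eqref{whitney-recurrence-i}.

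Recurrence \eqref{whitney-recurrence-ii} I would then deduce from \eqref{whitney-recurrence-i} by telescoping: setting $A_n=W_{n,k}-(n-1)W_{n-1,k-1}$, relation \eqref{whitney-recurrence-i} rearranges to $A_n=A_{n-1}+(n-2)W_{n-2,k-3}$, and unfolding this down to the vanishing base term (using the stated values of $W_{n,0}$, $W_{n,1}$, $W_{n,2}$) produces exactly $\sum_{j=1}^{n-2}jW_{j,k-3}$. For \eqref{whitney-recurrence-iii} the key observation is that, for fixed $k$, $W_{m,k}$ agrees with a polynomial in $m$ of degree at most $k$: the cycle-weighted formula forces a distance-$k$ permutation to have at most $k+1$ non-fixed points, and since $1$ plays a determined role, $W_{m,k}$ is a finite sum of terms of the shape $(\text{constant})\cdot\binom{m-1}{r}$ with $r\leq k$; as these binomials vanish below their support, the identity holds for every $m\geq 1$. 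Consequently the $(k+1)$-th finite difference of $m\mapsto W_{m,k}$ is identically zero, which is precisely \eqref{whitney-recurrence-iii}. I expect the main obstacle to be the point where the insertion analysis fails to close on its own: the number of distance-raising insertions depends on the refined statistic $F(\sigma)$ rather than on $pexc(\sigma)$ alone, so the naive bijection does not yield a recurrence purely among the $W_{n,k}$. The double-counting identity for $\sum F$ is what dissolves this dependence, and getting its distance-preservation claim right, keeping $1$ as the distinguished minimal element under reduction, is the delicate step.
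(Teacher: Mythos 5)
Your proposal is correct, and on all three recurrences it takes a genuinely different route from the paper. For \eqref{whitney-recurrence-i} the paper conditions on the first element: permutations with $\pi_1=i\neq 1$ are sent by $\pi\mapsto\pi\circ(1,i)$, followed by deletion of the resulting fixed point $i$, onto $\Symdist{n-1}{k-1}$, giving $W_{n,k}=W^{(1)}_{n,k}+(n-1)W_{n-1,k-1}$; permutations with $\pi_1=1$ are then split according to $\pi_n$, giving $W^{(1)}_{n,k}=W^{(1)}_{n-1,k}+(n-2)W_{n-2,k-3}$, and the two relations are combined. You instead condition on whether the value $n$ is fixed and use the cycle-insertion bijection; your case analysis of the distance shift under insertion (by $3$ exactly when $j$ is a fixed point different from $1$, by $1$ otherwise, the $j=1$ case being saved by the flip of the $-2$ correction) is right, and the dependence on the refined statistic $F(\sigma)$ is correctly dissolved by your double-counting identity $\sum_{pexc(\sigma)=d}F(\sigma)=(m-1)W_{m-1,d}$, which rests on the same fixed-point-deletion principle as the paper's bijections $\psi_i$ and $\xi_i$. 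For \eqref{whitney-recurrence-ii} the paper gives a second direct decomposition (permutations fixing $1$, grouped by their largest non-fixed element), while you telescope \eqref{whitney-recurrence-i}; this is legitimate because your proof of \eqref{whitney-recurrence-i} nowhere uses the bound $k\leq\lfloor 3(n-1)/2\rfloor$ (when $k$ exceeds the diameter all sets counted are empty and the relation reads $0=0$), so it holds for all $n\geq 3$, $k\geq 3$ and the base term $A_2=W_{2,k}-W_{1,k-1}$ vanishes --- worth stating explicitly, but not a gap. For \eqref{whitney-recurrence-iii} the paper shows every $\pi\in\Symdist{n+k+1}{k}$ fixes some element of $\{n+1,\ldots,n+k+1\}$ and applies inclusion--exclusion to the sets $B_i$, whereas you note that $W_{m,k}=\sum_{s}\binom{m-1}{s}d(s,k-s)+\sum_{s}\binom{m-1}{s-1}d(s,k+2-s)$ for every $m\geq 1$ (choose the support of the non-fixed points, then a derangement with the forced number of cycles on it), so $m\mapsto W_{m,k}$ is a polynomial of degree at most $k$ and \eqref{whitney-recurrence-iii} is exactly the vanishing of its $(k+1)$-st finite difference. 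Your route is more economical --- one decomposition plus two soft lemmas yield all three identities, and it proves \eqref{whitney-recurrence-iii} with no constraint tying $k$ to $n$ --- while the paper's three self-contained bijective arguments stay entirely inside the combinatorics and also produce the intermediate relations $W^{(2)}_{n,k}=(n-1)W_{n-1,k-1}$ and $W^{(1)}_{n,k}=\sum_{i=1}^{n-2}iW_{i,k-3}$, which it reuses in later sections.
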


\section{Combinatorial derivation of the formula for $W_{n,k}$}

The explicit formula for $W_{n,k}$ given in \Cref{formula_W} was obtained by \citet{portier-whitney} (notwithstanding some errors later corrected by \citet{shen-whitney}) using a generating function technique: they first derived the generating function of these numbers and then used it to deduce a formula for $W_{n,k}$. 
We give here a direct combinatorial derivation of the formula in  \Cref{formula_W} based on \emph{derangements}, i.e., permutations without any fixed point. We proceed in two steps, by first computing the number $W^{(1)}_{n,k}$  of permutations at distance $k$ that fix $1$ and then the number $W^{(2)}_{n,k}$ of permutations at distance $k$ that do not fix $1$. 
We will need the following preliminary result, which counts the number $d(n, k)$ of derangements in $\Sym{n}$ with $k$ cycles.

\begin{lemma}\cite{riordan-intro}\label{lemma:number-of-derangements-with-k-cycles}
For $1 \leq k \leq n$, we have 
$$d(n,k)= \sum_{j=0}^{k} (-1)^j \binom{n}{j} \firststirlingnumber{n-j}{k-j},$$
with the convention $d(n,0)=0$.
\end{lemma}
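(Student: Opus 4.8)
The plan is to prove this by inclusion--exclusion over the fixed points that a permutation is forbidden to have. For each element $m\in\{1,\dots,n\}$, let $A_m$ denote the set of permutations in $\Sym{n}$ having exactly $k$ cycles for which $m$ is a fixed point. A permutation counted by $d(n,k)$ is precisely one with $k$ cycles that lies in none of the $A_m$, so that
\[
d(n,k) = \left| \{\pi \in \Sym{n} : \pi \text{ has } k \text{ cycles}\} \setminus \bigcup_{m=1}^{n} A_m \right|,
\]
and the inclusion--exclusion principle will express this as an alternating sum of cardinalities of intersections $A_{m_1}\cap\cdots\cap A_{m_j}$.

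The key step is to evaluate those intersections. I would argue that for any $j$-subset $S=\{m_1,\dots,m_j\}$ of $\{1,\dots,n\}$, the intersection $\bigcap_{i=1}^{j}A_{m_i}$ consists exactly of the permutations with $k$ cycles that fix every element of $S$. Deleting the $j$ fixed points indexed by $S$ (each a cycle of length $1$) and passing to the reduced form yields a bijection with permutations of the remaining $n-j$ elements having $k-j$ cycles; hence
\[
\left| \bigcap_{i=1}^{j} A_{m_i} \right| = \firststirlingnumber{n-j}{k-j},
\]
a quantity that depends on $S$ only through its size $j$. Since there are $\binom{n}{j}$ subsets of size $j$, grouping the terms of the inclusion--exclusion sum by $j$ gives
\[
d(n,k) = \sum_{j=0}^{n} (-1)^j \binom{n}{j} \firststirlingnumber{n-j}{k-j}.
\]

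Finally I would truncate the summation range: whenever $j>k$ we have $k-j<0$, so $\firststirlingnumber{n-j}{k-j}=0$ and the corresponding terms vanish, which reduces the upper limit from $n$ to $k$ and recovers the stated formula. The only points requiring care --- and the closest thing to an obstacle in an otherwise routine argument --- are the bijective justification of the intersection cardinality (checking that deleting the prescribed fixed points is well defined and lowers the cycle count by exactly $j$) together with the boundary conventions on $\firststirlingnumber{\cdot}{\cdot}$, which also make the stated value $d(n,0)=0$ consistent for $n\geq 1$.
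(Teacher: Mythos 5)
Your proof is correct. Note, however, that the paper itself offers no proof of this lemma: it is quoted as a known result with a citation to Riordan's book, so there is no internal argument to compare yours against. Your inclusion--exclusion over forbidden fixed points, with the key evaluation $\left|\bigcap_{m\in S}A_m\right|=\firststirlingnumber{n-j}{k-j}$ via deletion of the $j$ prescribed $1$-cycles, is the classical derivation of the formula for $d(n,k)$ (the ``associated Stirling numbers of the first kind''), and all the steps you flag as needing care --- the bijection lowering the cycle count by exactly $j$, the vanishing of $\firststirlingnumber{n-j}{k-j}$ for $j>k$ that truncates the sum, and the consistency of the convention $d(n,0)=0$ --- are handled correctly. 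In effect you have supplied a self-contained combinatorial proof where the paper relies on an external reference, which is in keeping with the paper's stated preference for combinatorial arguments.
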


The following well-known relation (see e.g. \citet[page 167]{DBLP:books/daglib/0068021}) will also be useful:
\begin{equation}\label{eqn:handy-rule}
\binom{r}{m}\binom{m}{p}=\binom{r}{p}\binom{r-p}{m-p}
\end{equation}
for any $m$, $p$, $r$ $\in\mathbb{N}$.

\begin{proposition}\label{prop:perms-at-dist-k-fixing-one}
The number of permutations $\pi$ in $\Sym{n}$ with $pexc(\pi)=k$ and $\pi_1=1$ is
\begin{equation}\label{W1}
W^{(1)}_{n,k}=\sum_{\ell=\max(n-k-1,0)}^{\lfloor (2n-k-2)/2 \rfloor}
\sum_{j=0}^{k-n+\ell+1} \binom{n-1}{\ell+j}\binom{\ell+j}{j}
(-1)^j \firststirlingnumber{n-\ell-j-1}{k-n+\ell-j+1}.
\end{equation}    
\end{proposition}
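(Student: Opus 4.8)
The plan is to lean on \Cref{formula_dist}: because $\pi_1=1$ makes $1$ a fixed point, each permutation we must count satisfies $pexc(\pi)=n+c_{\geq 2}(\pi)-c_1(\pi)=k$. First I would isolate the ``moving'' part of $\pi$. Write $\{2,3,\ldots,n\}$ as the disjoint union of the fixed points of $\pi$ other than $1$ and the set $D$ of elements lying in cycles of length at least $2$, and set $m=|D|$. The restriction of $\pi$ to $D$ is then a derangement of $D$ whose number of cycles equals $c_{\geq 2}(\pi)$, while the total number of fixed points is $c_1(\pi)=n-m$. Substituting these into the distance formula collapses it to $pexc(\pi)=m+c_{\geq 2}(\pi)$, so the requirement $pexc(\pi)=k$ is equivalent to $c_{\geq 2}(\pi)=k-m$.

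This reduction yields a clean intermediate expression. Choosing the support $D$ costs $\binom{n-1}{m}$, and, by passing to reduced form, the number of derangements of an $m$-element set with exactly $k-m$ cycles equals $d(m,k-m)$ regardless of which set $D$ is. Summing over the possible sizes $m$ gives
\begin{equation*}
W^{(1)}_{n,k}=\sum_{m}\binom{n-1}{m}\,d(m,k-m).
\end{equation*}
To reach the stated closed form I would expand $d(m,k-m)$ using \Cref{lemma:number-of-derangements-with-k-cycles}, producing the inner sum $\sum_{j}(-1)^j\binom{m}{j}\firststirlingnumber{m-j}{k-m-j}$, and then reindex the outer sum via $\ell=n-1-m$. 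Under this substitution the Stirling factor becomes exactly $\firststirlingnumber{n-\ell-j-1}{k-n+\ell-j+1}$ and the inner range $0\le j\le k-m$ turns into $0\le j\le k-n+\ell+1$, as in \eqref{W1}.

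It then remains to reconcile the binomial coefficients and the outer summation limits. After reindexing the binomial product reads $\binom{n-1}{\ell}\binom{n-1-\ell}{j}$; a single application of the handy rule \eqref{eqn:handy-rule} (both forms equal $\binom{n-1}{j}\binom{n-1-j}{\ell}$) rewrites it as $\binom{n-1}{\ell+j}\binom{\ell+j}{j}$, exactly the product claimed. For the outer limits I would observe that the summand vanishes outside the stated interval: the Stirling number $\firststirlingnumber{n-\ell-j-1}{k-n+\ell-j+1}$ is nonzero only when its lower argument lies between $0$ and its upper argument, and for $j=0$ these two inequalities read precisely $\ell\ge\max(n-k-1,0)$ and $\ell\le\lfloor(2n-k-2)/2\rfloor$; any $\ell$ beyond this range contributes only zero terms. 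I expect this final bookkeeping --- keeping the reindexing, the handy-rule rewriting, and the vanishing of boundary terms mutually consistent --- to be the only delicate point, since the combinatorial heart of the argument is the elementary reduction to $\sum_{m}\binom{n-1}{m}d(m,k-m)$.
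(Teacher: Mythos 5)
Your proposal is correct and takes essentially the same approach as the paper: conditioning on how many elements are fixed (your parameter $m$ is just $n-i$, where $i$ is the paper's number of fixed points), arriving at the same intermediate identity $W^{(1)}_{n,k}=\sum_i \binom{n-1}{i-1}d(n-i,k-n+i)=\sum_m \binom{n-1}{m}d(m,k-m)$, expanding via \Cref{lemma:number-of-derangements-with-k-cycles}, reindexing to $\ell$, and applying \Cref{eqn:handy-rule}. The only cosmetic difference is that the paper fixes the outer summation bounds up front from the constraints $k-n+i\geq 0$ and $n-i\geq k-n+i$, whereas you recover them a posteriori from the vanishing of the Stirling numbers; both justifications are valid.
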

\begin{proof}
We sum over all possible values $i$ for the number of fixed
points of $\pi$. For a permutation $\pi$ with $pexc(\pi)=k$
and $c_1(\pi)=i$, \Cref{form} implies that 
$c_{\geq 2}(\pi)=k-n+i$. From the conditions $k-n+i \geq 0$
and $n-i \geq k-n+i$

we easily obtain the following bounds for $i$: $\max(n-k,1) \leq i \leq \lfloor (2n-k)/2 \rfloor$.

Since $\pi_1=1$, there are $\binom{n-1}{i-1}$
choices for the other $i-1$ fixed points. The
remaining $n-i$ elements must form $k-n+i$ cycles of length
at least 2, and there are exactly $d(n-i,k-n+i)$ ways to do this. We obtain
\begin{align*}
W^{(1)}_{n,k} &= \sum_{i=\max(n-k,1)}^{\lfloor (2n-k)/2 \rfloor}
\binom{n-1}{i-1} d(n-i,k-n+i)\\
& =\sum_{i=\max(n-k,1)}^{\lfloor (2n-k)/2 \rfloor}
\binom{n-1}{i-1} \sum_{j=0}^{k-n+i} (-1)^j \binom{n-i}{j}
\firststirlingnumber{n-i-j}{k-n+i-j} & \mbox{(using \Cref{lemma:number-of-derangements-with-k-cycles})}.
\end{align*}
Setting $\ell=i-1$, we have
$$W^{(1)}_{n,k}=\sum_{\ell=\max(n-k-1,0)}^{\lfloor (2n-k-2)/2 \rfloor}
\binom{n-1}{\ell} \sum_{j=0}^{k-n+\ell+1} (-1)^j
\binom{n-\ell-1}{j}
\firststirlingnumber{n-\ell-j-1}{k-n+\ell-j+1},$$
and using \Cref{eqn:handy-rule} 
with $r=n-1$, $m=\ell+j$ and $p=j$ 
allows us to complete the proof.

\end{proof}

\begin{proposition}\label{prop:perms-at-dist-k-not-fixing-one}
The number of permutations $\pi$ in $\Sym{n}$ with $pexc(\pi)=k$ and $\pi_1\neq 1$ is
\begin{eqnarray*}
W^{(2)}_{n,k} &=&
\sum_{i=\max(n-k-2,0)}^{\lfloor(2n-k-2)/2\rfloor}
\sum_{j=0}^{k-n+i+2} \binom{n-1}{i+j}\binom{i+j}{j} (-1)^j
\firststirlingnumber{n-i-j}{k-n+i+2-j} \\
& + &\sum_{i=\max(n-k-1,0)}^{\lfloor(2n-k-2)/2\rfloor}
\sum_{j=1}^{k-n+i+2} \binom{n-1}{i+j-1}\binom{i+j-1}{j-1} (-1)^j
\firststirlingnumber{n-i-j}{k-n+i+2-j}.
\end{eqnarray*}

\end{proposition}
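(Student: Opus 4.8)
The plan is to mirror the proof of \Cref{prop:perms-at-dist-k-fixing-one}, adapting it to the case $\pi_1\neq 1$. First I would sum over the number $i=c_1(\pi)$ of fixed points. Since $\pi_1\neq 1$, element $1$ is never a fixed point, so the $i$ fixed points must be chosen among $\{2,3,\ldots,n\}$, giving $\binom{n-1}{i}$ choices; this is already a point of contrast with \Cref{prop:perms-at-dist-k-fixing-one}, where $1$ itself was one of the fixed points and one therefore set $\ell=i-1$. Because $\pi_1\neq 1$, \Cref{form} now reads $pexc(\pi)=n+c_{\geq 2}(\pi)-c_1(\pi)-2$, so $pexc(\pi)=k$ forces $c_{\geq 2}(\pi)=k-n+i+2$. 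The remaining $n-i$ elements, which include $1$, must then form a derangement with exactly $k-n+i+2$ cycles, and there are $d(n-i,k-n+i+2)$ such arrangements; every one of them automatically satisfies $\pi_1\neq 1$ since $1$ lies in a cycle of length at least $2$. Arguing on the bounds as in \Cref{prop:perms-at-dist-k-fixing-one} (namely $k-n+i+2\geq 1$ and $n-i\geq k-n+i+2$) yields the single sum
\[
W^{(2)}_{n,k}=\sum_{i}\binom{n-1}{i}\,d(n-i,k-n+i+2),
\]
with $\max(n-k-1,0)\leq i\leq\lfloor(2n-k-2)/2\rfloor$.

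Next I would expand the derangement numbers with \Cref{lemma:number-of-derangements-with-k-cycles}, turning the above into
\[
W^{(2)}_{n,k}=\sum_{i}\sum_{j=0}^{k-n+i+2}(-1)^j\binom{n-1}{i}\binom{n-i}{j}\firststirlingnumber{n-i-j}{k-n+i+2-j}.
\]
The crux is that, unlike in \Cref{prop:perms-at-dist-k-fixing-one}, the product $\binom{n-1}{i}\binom{n-i}{j}$ does \emph{not} match the pattern of \Cref{eqn:handy-rule}: the second factor has upper index $n-i$ instead of $n-1-i$, the discrepancy being exactly the extra element $1$ carried by the derangement. I would resolve this with Pascal's rule, $\binom{n-i}{j}=\binom{n-1-i}{j}+\binom{n-1-i}{j-1}$, which splits the expression into precisely the two sums of the statement (combinatorially, the two terms record whether or not element $1$ is among the $j$ points removed in the inclusion--exclusion).

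Each resulting product is now in the form required by \Cref{eqn:handy-rule}. Applying it with $r=n-1$ and $p=i$ to $\binom{n-1}{i}\binom{n-1-i}{j}$ (so that $m=i+j$) produces $\binom{n-1}{i+j}\binom{i+j}{j}$, which is the first sum; applying it with $r=n-1$ and $p=i$ to $\binom{n-1}{i}\binom{n-1-i}{j-1}$ (so that $m=i+j-1$) produces $\binom{n-1}{i+j-1}\binom{i+j-1}{j-1}$, which is the second sum. In the second sum the term $j=0$ vanishes, since the factor $\binom{i-1}{-1}$ is zero, which is why its inner summation is stated to begin at $j=1$.

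The main obstacle will be the bookkeeping of the summation bounds rather than the algebra. Both sums inherit the range $\max(n-k-1,0)\leq i\leq\lfloor(2n-k-2)/2\rfloor$ from the single sum above, and the second sum matches the statement directly. For the first sum I would have to check that lowering the index to $i=\max(n-k-2,0)$, as written in the statement, changes nothing: at $i=n-k-2$ one has $k-n+i+2=0$, so the only surviving factor is $\firststirlingnumber{n-i}{0}=\firststirlingnumber{k+2}{0}=0$, and the extra term is harmless. Confirming that no genuine contribution is lost or double counted at the boundaries is the one delicate point; the remaining steps are direct substitutions.
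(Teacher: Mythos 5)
Your proposal is correct and follows essentially the same route as the paper: summing over the number $i$ of fixed points (chosen among $\{2,\ldots,n\}$ since $\pi_1\neq 1$), invoking \Cref{lemma:number-of-derangements-with-k-cycles}, and then splitting $\binom{n-1}{i}\binom{n-i}{j}$ into $\binom{n-1}{i+j}\binom{i+j}{j}+\binom{n-1}{i+j-1}\binom{i+j-1}{j-1}$. The only cosmetic difference is that you derive this splitting via Pascal's rule followed by two applications of \Cref{eqn:handy-rule}, whereas the paper applies \Cref{eqn:handy-rule} once and then uses $\frac{n-i}{n-i-j}=1+\frac{j}{n-i-j}$; your boundary bookkeeping (the vanishing terms $\firststirlingnumber{k+2}{0}=0$ and $\binom{i-1}{-1}=0$) correctly accounts for the index ranges in the statement.
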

\begin{proof}
As in the proof of \Cref{prop:perms-at-dist-k-fixing-one}, we sum over all possible values $i$ for the number
of fixed points of $\pi$. In this case, if a permutation $\pi$ has
$i$ fixed points and is at distance $k$, then \Cref{form} implies
that $c_{\geq 2}(\pi)=k-n+i+2$.  
From the two conditions: $k-n+i+2 \geq 0$ and $n-i \geq k-n+i+2$, we derive the following bounds for $i$: $\max(n-k-2,0) \leq i \leq \lfloor(2n-k-2)/2\rfloor.$

Since $\pi_1\neq 1$, we have $\binom{n-1}{i}$ choices for the
$i$ fixed points. Furthermore, the remaining $n-i$ elements must
form $k-n+i+2$ cycles of length at least 2. We obtain:
\begin{align*}
W^{(2)}_{n,k} &= \sum_{i=\max(n-k-2,0)}^{\lfloor(2n-k-2)/2\rfloor}
\binom{n-1}{i} d(n-i,k-n+i+2)\\
& =\sum_{i=\max(n-k-2,0)}^{\lfloor(2n-k-2)/2\rfloor}
\binom{n-1}{i} \sum_{j=0}^{k-n+i+2} (-1)^j \binom{n-i}{j}
\firststirlingnumber{n-i-j}{k-n+i+2-j}& \mbox{(using \Cref{lemma:number-of-derangements-with-k-cycles})}.
\end{align*}
One can easily check that the following relations hold:
\begin{align*}
\binom{n-1}{i}\binom{n-i}{j} &=\binom{n-1}{i+j}\binom{i+j}{j}\frac{n-i}{n-i-j} & \mbox{(using \Cref{eqn:handy-rule})}\\
&= \binom{n-1}{i+j}\binom{i+j}{j}\left( 1+
\frac{j}{n-i-j}\right)\\
&= \binom{n-1}{i+j}\binom{i+j}{j} +
\binom{n-1}{i+j-1}\binom{i+j-1}{j-1},
\end{align*}
where for the last line we have used the fact that
$$\binom{n-1}{i+j}\binom{i+j}{j}\frac{j}{n-i-j}=\binom{n-1}{i+j-1}\binom{i+j-1}{j-1}.$$
This allows us to obtain the formula in the statement, and the proof is complete.
\end{proof}

\Cref{prop:perms-at-dist-k-fixing-one,prop:perms-at-dist-k-not-fixing-one} allow us to recover the expression in \Cref{formula_W} as follows. First, decompose the expression in \Cref{prop:perms-at-dist-k-not-fixing-one} into $S_1$ and $S_2$:
\begin{eqnarray*}
W^{(2)}_{n,k} &=&
\overbrace{\sum_{i=\max(n-k-2,0)}^{\lfloor(2n-k-2)/2\rfloor}
\sum_{j=0}^{k-n+i+2} \binom{n-1}{i+j}\binom{i+j}{j} (-1)^j
\firststirlingnumber{n-i-j}{k-n+i+2-j}}^{S_1} \\
& + &\underbrace{\sum_{i=\max(n-k-1,0)}^{\lfloor(2n-k-2)/2\rfloor}
\sum_{j=1}^{k-n+i+2} \binom{n-1}{i+j-1}\binom{i+j-1}{j-1} (-1)^j
\firststirlingnumber{n-i-j}{k-n+i+2-j}}_{S_2}, 
\end{eqnarray*}
and set $u=j-1$ in $S_2$ to obtain
$$S_2= - \sum_{i=\max(n-k-1,0)}^{\lfloor(2n-k-2)/2\rfloor}
\sum_{u=0}^{k-n+i+1} \binom{n-1}{i+u}\binom{i+u}{u} (-1)^u
\firststirlingnumber{n-i-u-1}{k-n+i-u+1}.$$ 
Using \Cref{W1}, we note that 
$S_2=- W^{(1)}_{n,k}$, so 
$W_{n,k}= W^{(1)}_{n,k}+ W^{(2)}_{n,k} = S_1$. If we then set $\ell=n-i-j-1$ in $S_1$,
we obtain
$$W_{n,k}= \sum_{\ell=0}^{\min(n-1,k+1)}
\sum_{j=\max(\lceil (k-2\ell)/2\rceil,0)}^{\min(n-1-\ell,\lfloor
(k+1-\ell)/2 \rfloor)} \binom{n-1}{\ell}\binom{n-1-\ell}{j} (-1)^j
\firststirlingnumber{\ell+1}{k-\ell-2j+1},$$ 
and the fact that $s(n,k)=(-1)^{n-k} \firststirlingnumber{n}{k}$ yields the
formula in \Cref{formula_W}.

\section{Combinatorial proof of the recurrence relations}

We now turn to the recurrence relations in \Cref{thm:recurrences}. We will find it convenient to introduce the following additional notation:
\begin{align*}
\Symdist{n}{k}&=\{\pi\in\Sym{n}\ |\ pexc(\pi)=k\} \mbox{ (so $W_{n,k} = |\Symdist{n}{k}|$); and}\\
\Sym{n, k, i}&=\{\pi\in\Symdist{n}{k}\ |\ \pi_i=i\}. 
\end{align*}

\subsection{Proof of \Cref{whitney-recurrence-i}}

\citet{portier-whitney} prove the recurrence relation in \Cref{whitney-recurrence-i} using a generating function technique.
We give here a direct combinatorial proof, again distinguishing between permutations that fix the first element and those that do not.

\begin{proof}
Let  $n\geq 1$ and $k$ such that $3\leq k\leq \left\lfloor 3(n-1) / 2 \right\rfloor$ be fixed. 

\begin{enumerate}
    \item {\bf permutations $\pi$ in $\Symdist{n}{k}$ with $\pi_1\neq 1$:} we compute $W^{(2)}_{n,k}$ by summing over all permutations $\pi$ which are at distance $k$ and verify $\pi_1=i$ for a given $i \in\{2,3,\ldots, n\}$.

For a given $i$ verifying  $2 \leq i \leq n$, we introduce the following mappings:
\begin{align*}
    &\phi_{i}:\{\pi\in\Symdist{n}{k}\ |\ \pi_1=i\}\rightarrow \Sym{n,k-1,i}:&&\pi\mapsto\sigma=\pi\circ(1,i),\\
    &\psi_{i}:\Sym{n,k-1,i}\rightarrow\Symdist{n-1}{k-1}:                   &&\sigma\mapsto\tau=red(dcd(\sigma)\setminus (i)).
\end{align*}

Both mappings are bijective and allow us to associate any element $\pi \in \Symdist{n}{k}$ with $\pi_1=i$ to an element $\tau=\psi_{i}(\phi_{i}(\pi)) \in \Symdist{n-1}{k-1}$. Therefore,

\begin{equation}\label{cardinal}
\left| \{\pi\in\Symdist{n}{k}\ |\ \pi_1=i\} \right| = W_{n-1,k-1}.
\end{equation}
Since this holds for every $i$ such that $2 \leq i \leq n$, we obtain
\begin{equation}\label{cardinal2}
W^{(2)}_{n,k}= \left| \{\pi\in\Symdist{n}{k}\ |\ \pi_1 \neq 1\} \right| = (n-1)W_{n-1,k-1},
\end{equation}
which in turn yields

\begin{equation}\label{relation_W-H}
W_{n,k}=W^{(1)}_{n,k}+W^{(2)}_{n,k}=W^{(1)}_{n,k}+ (n-1)W_{n-1,k-1}.
\end{equation}

\item {\bf permutations $\pi$ in $\Symdist{n}{k}$ with $\pi_1=1$:} in order to compute $W^{(1)}_{n,k}$, we further distinguish permutations in 

$\Sym{n,k,1}$ 
based on the value of their last element. More precisely, for $i=2,3,\ldots,n$, let $W^{(1,i)}_{n,k}$  denote the number of permutations 

$\pi$ in $\Sym{n,k,1}$ with $\pi_n=i$. 
We have 
$$W^{(1)}_{n,k}=W^{(1,n)}_{n,k}+ \sum_{i=2}^{n-1} W^{(1,i)}_{n,k}.$$

We first note that $W^{(1,n)}_{n,k}=W^{(1)}_{n-1,k}$, since any permutation 

$\pi \in \Sym{n,k,1}$ with $\pi_n=n$ 
can be bijectively mapped onto a permutation 

$\tau \in\Sym{n-1,k,1}$
by deleting  $\pi_n=n$.

For $i\in\{2,3,\ldots,n-1\}$,  we will next compute $W^{(1,i)}_{n,k}$. Let  

$\pi \in \Sym{n,k,1}$ be a permutation with 
$\pi_n=i$. Then deleting $\pi_1=1$ and renaming element $n$ into $1$

maps $\pi$ bijectively onto a permutation $\tau \in \Sym{n-1}$ with $\tau_1=i$ and having the same cycle structure as $\pi$ except for the deleted singleton $(1)$. Using \Cref{form}, we can easily see that $pexc(\tau)=k-2$, so $\tau\in \Symdist{n-1}{k-2}$ and \Cref{cardinal} implies that the number of such permutations $\tau$ equals $W_{n-2,k-3}$.
Therefore, the number of permutations $\pi$ in $\Sym{n,k,1}$ with $\pi_n=i$ is $(n-2)W_{n-2,k-3}$. 
\end{enumerate}
From the above discussion, we deduce 

\begin{equation}\label{relation_W1}
W^{(1)}_{n,k}  =  W^{(1)}_{n-1,k} + (n-2)W_{n-2,k-3}.
\end{equation}
Using \Cref{relation_W-H} for $W^{(1)}_{n-1,k}$ we further obtain 
$$W^{(1)}_{n,k}=W_{n-1,k}-(n-2)W_{n-2,k-1} +(n-2)W_{n-2,k-3},$$

from which we finally recover \Cref{whitney-recurrence-i} by replacing the left-hand side using again \Cref{relation_W-H}.

\end{proof}

\subsection{Proof of \Cref{whitney-recurrence-ii}}

\begin{proof}
Let again $n\geq 1$ and $k$ such that $3\leq k\leq \left\lfloor\frac{3(n-1)}{2}\right\rfloor$ be fixed.
With the same notation as in the previous subsection, and using \Cref{relation_W-H}, we see that it suffices to prove
\begin{equation} \label{form_W1}
W^{(1)}_{n,k} = \sum_{i=1}^{n-2} i W_{i, k-3}.
\end{equation}

For $1 \leq i \leq n-2$, let $\mathcal{F}_i$ denote the set of permutations 

$\pi \in \Sym{n,k,1}$ with $i+2=\argmax_{1\leq j\leq n} \{\pi_j\neq j\}$. 
We thus have $\pi_{i+2}\neq i+2$ and $\pi$ fixes all elements from $i+3$ to $n$.
Note that $\max\{j=1,2,\ldots,n: \pi_j \neq j\}\not\in\{1,2\}$ since we assume $k \geq 3$.
Therefore,
\begin{equation}\label{decomposition}
W^{(1)}_{n,k} = \sum_{i=1}^{n-2} |\mathcal{F}_i|.
\end{equation}

To any permutation $\pi \in \mathcal{F}_i$, we can bijectively associate a permutation $\tau \in \Sym{i+1}$ obtained from $\pi$ by deleting singletons $(1), (i+3),(i+4),\ldots,(n)$ and renaming element $i+2$ into $1$. The resulting permutation $\tau$ verifies $\tau_1 \neq 1$, and $pexc(\tau)=k-2$ by \Cref{form}. Using this bijection and \Cref{cardinal2}, we obtain
$$ |\mathcal{F}_i| = W_{i+1,k-2}-W^{(1)}_{i+1,k-2} = iW_{i,k-3},$$
and \Cref{decomposition} allows us to complete the proof.
\end{proof}

We note that \citet{qiu-some} gave an alternative combinatorial proof for \Cref{relation_W1} and then derived \Cref{form_W1} by  recurrence.

\subsection{Proof of \Cref{whitney-recurrence-iii}}

We give here a combinatorial proof for  \Cref{whitney-recurrence-iii}, which was proved by \citet{qiu-some} by induction, in a direct computational manner.

\begin{proof}
For every $i$ such that $1 \leq i \leq k+1$, we let $B_i=\Sym{n+k+1,k,n+i}$.
We first prove that
\begin{equation}\label{union}
W_{n+k+1,k} = \left| B_1 \cup B_2 \cup \cdots \cup B_{k+1} \right|.
\end{equation}
To achieve this, we show that any 
permutation 

$\pi \in \Symdist{n+k+1}{k}$ 
fixes at least one element among $n+1,n+2,\ldots,n+k+1$, and therefore $\pi \in  \bigcup_{i=1}^{k+1} B_{i} $, which will imply \Cref{union}.  
\\
\begin{itemize}
    \item 
 If $\pi_1=1$, from \Cref{form} we deduce
$c_1(\pi) = n+1 + c_{\geq 2}(\pi) \geq n+1.$ 
Therefore, at least one element among $n+1,n+2,\ldots,n+k+1$ must be a singleton. \\
\item If $\pi_1 \neq 1$, then \Cref{form} implies $c_1(\pi) = n+c_{\geq 2}(\pi)-1 \geq n.$ 
Since 1 is not a singleton, there must also be at least one  singleton among the elements  $n+1,n+2,\ldots,n+k+1$. 
\end{itemize}

Since the roles of the elements $n+1,\ldots,n+k+1$ are perfectly interchangeable, we have $|B_{j_1} \cap B_{j_2}\cap \cdots \cap B_{j_i} | = |B_1 \cap B_2\cap \cdots \cap B_i |, $ for every $j_1, j_2, \ldots, j_i$ such that $1\leq j_1 < j_2 < \cdots < j_i \leq n$ and every $i$ such that $1\leq i\leq k+1.$
From \Cref{union} and the inclusion-exclusion rule, we deduce
$$
W_{n+k+1,k} = \sum_{i=1}^{k+1} (-1)^{i+1} \binom{k+1}{i} |B_1 \cap B_2\cap \cdots \cap B_i |.
$$

In order to prove \Cref{whitney-recurrence-iii}, it must be noted that for every $i$ such that $1 \leq i \leq k+1$: 
\begin{equation}\label{intersection}
|B_1 \cap B_2\cap \cdots \cap B_i  | = W_{n+k+1-i,k}.
\end{equation}

Indeed, for every $i$ such that $1 \leq i \leq k+1$, we can define the following bijection,
\begin{align*}
\xi_i : &\ B_1 \cap B_2\cap \cdots \cap B_i \rightarrow \Symdist{n+k+1-i}{k} \\
 : &\ \pi\mapsto\tau=red(dcd(\pi)\setminus \{(n+1),\ldots,(n+i)\}),
\end{align*}
which proves \Cref{intersection}.    
\end{proof}

\section{Generating function, mean and variance of the distance distribution}

We obtain in this section expressions for the mean $\mu_n$ and the variance $\sigma_n^2$ of the prefix exchange distance distribution. More precisely, for a uniform random permutation $\pi$ in $\Sym{n}$, we have $\mathbb{P}(pexc(\pi)=k)=W_{n,k}/n!$ and
\begin{align*}
\mu_n &=\mathbb{E}(pexc(\pi)) = \frac{1}{n!}\sum_{k=0}^{\infty} kW_{n,k},\\
\sigma_n^2 &= \mathrm{Var}(pexc(\pi))= \frac{1}{n!}\sum_{k=0}^{\infty}k^2 W_{n,k} - \mu_n^2.
\end{align*}

We start by computing the ordinary generating function
$$W_n(x) = \sum_{k=0}^\infty W_{n,k}x^k.$$
As is well-known (see e.g. \citet{wilf}), the mean and the variance can be obtained by derivating $W_n(x)$:
\begin{align} 
\mu_n &= \frac{W_n'(1)}{n!};\label{mean} \\
\sigma_n^2 &=  \frac{W_n''(1)}{n!} + \mu_n - \mu_n^2 = \frac{W_n''(1)}{n!} - \mu_n(\mu_n - 1).\label{variance}
\end{align}

\subsection{The generating function}

We give here an alternative proof of a formula known to \citet{portier-whitney} for computing the ordinary generating function $W_n(x)$ of the sequence $(W_{n,k})_{k \geq 0}$.

Our proof uses \Cref{formula_W} as a starting point, whereas \citeauthor{portier-whitney} (with subsequent corrections by \citet{shen-whitney}) first computed the generating function, then used it to derive the expression in \Cref{formula_W}.

\begin{theorem}
The ordinary generating function for the prefix exchange distance distribution is given, for every $x \in \mathbb{C}$, by the following formula:
\begin{equation}\label{gen_fct}
W_n(x) = \sum_{i=0}^{n-1} \binom{n-1}{i} (1-x^2)^{n-1-i}  x^{i} \prod_{j=1}^i (x+j).
\end{equation}
\end{theorem}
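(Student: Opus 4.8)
The plan is to start from the explicit formula for $W_{n,k}$ in \Cref{formula_W}, or more conveniently from the cleaner intermediate expression
$$
W_{n,k}=\sum_{\ell=0}^{\min(n-1,k+1)}\sum_{j}\binom{n-1}{\ell}\binom{n-1-\ell}{j}(-1)^j\firststirlingnumber{\ell+1}{k-\ell-2j+1}
$$
obtained just before the statement, and substitute it into $W_n(x)=\sum_{k\geq 0}W_{n,k}x^k$. The idea is that the outer index $\ell$ will correspond directly to the index $i$ appearing in \Cref{gen_fct}, so I would pull the factor $\binom{n-1}{\ell}$ out front and show that the remaining double sum over $j$ and $k$, weighted by $x^k$, factors as $(1-x^2)^{n-1-\ell}x^{\ell}\prod_{j=1}^{\ell}(x+j)$. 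The natural first move is to interchange the order of summation so that $k$ is summed freely; since $\firststirlingnumber{\ell+1}{m}$ vanishes outside $0\leq m\leq \ell+1$, the constraint on $k$ coming from $T_i,S_i$ dissolves once summation is unrestricted, which removes the floor/ceiling bookkeeping.

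First I would fix $\ell$ (equivalently $i$) and set $m=k-\ell-2j+1$, so that $x^k=x^{\ell-1}x^{2j}x^{m}$. The sum over $k$ then separates cleanly: the $\firststirlingnumber{\ell+1}{m}x^m$ part sums via \Cref{eqn:ascending-factorial} to the ascending factorial $x^{\overline{\ell+1}}=x(x+1)\cdots(x+\ell)=x\prod_{j=1}^{\ell}(x+j)$, while the sum over $j$ of $\binom{n-1-\ell}{j}(-1)^j(x^2)^j$ collapses by the binomial theorem to $(1-x^2)^{n-1-\ell}$. Combining these with the leftover power $x^{\ell-1}$ gives
$$
\binom{n-1}{\ell}(1-x^2)^{n-1-\ell}\,x^{\ell-1}\cdot x\prod_{j=1}^{\ell}(x+j)=\binom{n-1}{\ell}(1-x^2)^{n-1-\ell}x^{\ell}\prod_{j=1}^{\ell}(x+j),
$$
which is exactly the summand in \Cref{gen_fct} after renaming $\ell$ to $i$. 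Summing over $\ell$ completes the identity.

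The main obstacle I expect is purely bookkeeping rather than conceptual: one must justify that extending the $j$-sum and the $k$-sum to their full ranges introduces no spurious terms, i.e.\ that every term outside the original ranges $T_i\leq t\leq S_i$ is annihilated by the vanishing of $\firststirlingnumber{\ell+1}{m}$ for $m<0$ or $m>\ell+1$ (and of the binomial coefficients for $j>n-1-\ell$). Once that is verified, the two factorisations are immediate applications of the binomial theorem and of \Cref{eqn:ascending-factorial}. A minor point of care is tracking the shift between the exponent $x^k$ and the substitution $m=k-\ell-2j+1$, making sure the single leftover power $x^{\ell-1}$ is correctly absorbed into the $x\prod_{j=1}^\ell(x+j)$ coming from the ascending factorial; checking the boundary case $\ell=0$ (where the product is empty and equals $1$) against the known value $W_{n,0}=1$ provides a useful sanity check.
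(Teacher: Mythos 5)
Your proposal is correct and takes essentially the same route as the paper's proof: substitute the explicit formula for $W_{n,k}$ into $W_n(x)$, interchange the order of summation, change variables so the Stirling-number sum collapses to the ascending factorial $x^{\overline{\ell+1}}$ via \Cref{eqn:ascending-factorial}, and evaluate the alternating sum over $j$ by Newton's binomial formula. The only (cosmetic) difference is that you start from the unsigned-Stirling intermediate expression at the end of Section~3 rather than from \Cref{formula_W} itself, which spares you the sign manipulation $s(i+1,j)(-1)^{i+1+j}=\firststirlingnumber{i+1}{j}$ that the paper carries out explicitly.
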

\begin{proof}
Let $n \geq 1$ be fixed. Interchanging the order of summation in the formula of \Cref{formula_W} yields
\begin{equation*}
W_n(x) = \sum_{i=0}^{n-1} \binom{n-1}{i} \sum_{t=0}^{n-1-i}
\binom{n-1-i}{t} \sum_{k=2t+i-1}^{2t+2i}
s(i+1,k-i+1-2t)(-1)^{k+2-t} x^k,\\
\end{equation*}
with the convention $s(1,-1)=0$.
Given $i$ and $t$ such that $0 \leq i \leq n-1$ and $0 \leq t \leq n-i-1$, the bounds on $k$ come from the conditions $\lceil\frac{k-2i}{2}\rceil  \leq t \leq \lfloor\frac{k+1-i}{2}\rfloor$ appearing in \Cref{formula_W}.
Setting $j = k - 2t -i+1$, we obtain
\begin{align*}
& W_n(x) \\
=& \sum_{i=0}^{n-1} \binom{n-1}{i} \sum_{t=0}^{n-1-i}
\binom{n-1-i}{t} (-1)^t x^{2t+i-1} \sum_{j=0}^{i+1}
s(i+1,j)(-1)^{i+1+j} x^j\\
=&\sum_{i=0}^{n-1} \binom{n-1}{i} \sum_{t=0}^{n-1-i}
\binom{n-1-i}{t} (-1)^t x^{2t+i-1} \sum_{j=0}^{i+1}
\firststirlingnumber{i+1}{j} x^j\\
=&\sum_{i=0}^{n-1} \binom{n-1}{i} \sum_{t=0}^{n-1-i}
\binom{n-1-i}{t} (-1)^t x^{2t+i-1} x^{\overline{i+1}}\quad\quad\quad\quad \mbox{(using \Cref{eqn:ascending-factorial})}\\
= &\sum_{i=0}^{n-1} \binom{n-1}{i} x^{i-1}
 x^{\overline{i+1}} \sum_{t=0}^{n-1-i} \binom{n-1-i}{t} (-1)^t
 x^{2t}.
\end{align*}
The expression in the statement then follows from Newton's binomial formula, 
with the convention that $\prod_{j=1}^0 (x+j) = 1$.
\end{proof}

\subsection{Mean and variance of the distance distribution}

Let $\pi$ be a uniform random permutation in $\Sym{n}$. We will derive expressions for its mean $\mu_n$ and variance $\sigma_n^2$

which will involve the $n$-th harmonic number $H_n = \sum_{k=1}^n 1/k$.

Let $n \geq 3$. Using \Cref{gen_fct}, we can write:
\begin{equation}\label{sum}
W_n(x) = \prod_{j=1}^{n-1} [x(x+j)] + (n-1)(1-x^2)  \prod_{j=1}^{n-2} [x(x+j)] + g(x) +  (1-x^2)^3 h(x),
\end{equation}
where $h(x)$ is some polynomial function and
$$g(x) =  \frac{(n-1)(n-2)}{2}  (1-x^2)^2  \prod_{j=1}^{n-3} [x(x+j)] .$$

\subsubsection{Computation of the mean}

We now derive an expression for the expected prefix exchange distance. We note that the value of $\mu_n$ can be obtained as a particular case of Theorem 6.1, page~203 of \citet{cheng-average} by setting $k=n-1$ in the formula they derive. We give here a direct proof of that expression, which provides elements that will prove useful in obtaining the variance of the prefix exchange distance.

\begin{theorem}
Let $n \geq 1$. The expected value $\mu_n$ of the prefix exchange distance for a uniform random permutation in $\Sym{n}$ equals
\begin{equation}\label{formula_mean}
\mu_n = n + H_n - 4 + \frac{2}{n}.
\end{equation}
\end{theorem}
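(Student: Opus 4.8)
The plan is to evaluate the expected distance directly through the generating function, using the identity $\mu_n = W_n'(1)/n!$ recorded in \eqref{mean} together with the decomposition of $W_n(x)$ in \eqref{sum}. The key structural observation is that the last two summands of \eqref{sum}, namely $g(x)$ and $(1-x^2)^3 h(x)$, each carry the factor $(1-x^2)$ to a power at least $2$; consequently both these functions \emph{and} their first derivatives vanish at $x=1$, so neither contributes anything to $W_n'(1)$. This immediately reduces the problem to differentiating only the first two terms of \eqref{sum}, which is why the decomposition was arranged in precisely this way.

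For the leading term $P_1(x)=\prod_{j=1}^{n-1}[x(x+j)]$, I would use logarithmic differentiation, since the product structure turns into a sum of reciprocals. Noting first that $P_1(1)=\prod_{j=1}^{n-1}(1+j)=n!$ (a convenient check that $W_n(1)=n!$), one computes
$$\frac{P_1'(1)}{P_1(1)} = \left.\sum_{j=1}^{n-1}\left(\frac{1}{x}+\frac{1}{x+j}\right)\right|_{x=1} = (n-1)+\sum_{j=1}^{n-1}\frac{1}{j+1} = (n-1)+(H_n-1),$$
so that $P_1'(1)=n!\,(n-2+H_n)$. This is precisely the step that produces the harmonic number $H_n$, and no other term of \eqref{sum} generates one.

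For the second term $(n-1)(1-x^2)P_2(x)$ with $P_2(x)=\prod_{j=1}^{n-2}[x(x+j)]$, the product rule yields two pieces; the piece in which $(1-x^2)$ is left undifferentiated vanishes at $x=1$, so only the derivative of $(1-x^2)$, equal to $-2x$, survives. Since $P_2(1)=\prod_{j=1}^{n-2}(1+j)=(n-1)!$, this term contributes $-2(n-1)\cdot(n-1)! = -\tfrac{2(n-1)}{n}\,n!$. Collecting the two surviving contributions and dividing by $n!$ then gives
$$\mu_n = (n-2+H_n)-\frac{2(n-1)}{n} = n+H_n-4+\frac{2}{n},$$
which is the claimed formula.

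I do not expect a genuine obstacle here: the argument is essentially bookkeeping. The only care required is in correctly diagnosing which summands of \eqref{sum} die at $x=1$ (governed entirely by the multiplicity of the $(1-x^2)$ factor they carry) and in evaluating the two telescoping factorial products $P_1(1)=n!$ and $P_2(1)=(n-1)!$. The logarithmic-differentiation step is the heart of the proof, as it is what cleanly converts the falling/rising-factorial product into the harmonic sum, and it is also the computation I will reuse, one derivative higher, when turning to the variance.
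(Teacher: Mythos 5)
Your proof is correct and takes essentially the same route as the paper's: differentiate the decomposition in \Cref{sum} at $x=1$, observe that $g'(x)$ and $[(1-x^2)^3h(x)]'$ vanish there, and use logarithmic differentiation of the product (the paper writes the same sum in the form $\sum_{i=1}^{n-1}\frac{2x+i}{x(x+i)}$, which is exactly $\sum_i\bigl(\frac{1}{x}+\frac{1}{x+i}\bigr)$) to arrive at $W_n'(1)=n!(n+H_n-2)-2(n-1)(n-1)!$. The only point the paper adds that you omit is the remark that the decomposition in \Cref{sum} assumes $n\geq 3$, so the cases $n=1,2$ of the claimed formula must be (trivially) verified separately to cover the full range $n\geq 1$ in the statement.
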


\begin{proof}
We evaluate the derivative of $W_n(x)$ at $x=1$ using the simplified expression in \Cref{sum}. 
For $n \geq 3$, we have
\begin{align*}
W_n'(x) &=   \sum_{i=1}^{n-1}  \frac{2x+i}{x(x+i)}  \prod_{j=1}^{n-1} [x(x+j)] +  (n-1)\left(-2x + (1-x^2) \sum_{i=1}^{n-2}  \frac{2x+i}{x(x+i)} \right)\prod_{j=1}^{n-2}[x (x+j)] \\
&\ \ \ + g'(x)+  [(1-x^2)^3 h(x)]'.
\end{align*}
When $x=1$, both $g'(x)$ and $[(1-x^2)^3 h(x)]'$ vanish, and we obtain:
\begin{align*}
W_n'(1)&= n! \sum_{i=1}^{n-1}  \frac{i+2}{i+1}  - 2(n-1)(n-1)!\\
& =  n!\left(n-1 + \sum_{i=1}^{n-1}  \frac{1}{i+1}\right) - 2(n-1)(n-1)!\\
& = n!(n+ H_n - 2) - 2(n-1)(n-1)!.
\end{align*}
Using \Cref{mean}, we deduce
$$
\mu_n = n + H_n - 2 - 2\left(1-\frac{1}{n}\right) = n + H_n - 4 + \frac{2}{n} .
$$
Note that this expression remains valid for $n=1$ and $n=2$; the above assumption $n\geq 3$ was forced on us by \Cref{sum}.

\end{proof}

\subsubsection{Computation of the variance}

We will prove the following:

\begin{theorem}\label{thm:variance}
Let $n \geq 2$. The variance $\sigma_n^2$ of the prefix exchange distance for a uniform random permutation in $\Sym{n}$ equals
\begin{equation}\label{formula_var}
\sigma_n^2 =  H_n + \frac{4}{n} - \frac{8}{n^2}  - \sum_{j=1}^n \frac{1}{j^2}.
\end{equation}
\end{theorem}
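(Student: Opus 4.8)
The plan is to compute the variance by evaluating $W_n''(1)$ and then applying formula \eqref{variance}, reusing as much as possible from the mean computation. The key observation is that formula \eqref{sum} already isolates the terms that survive differentiation at $x=1$: both $g(x)$ and $(1-x^2)^3 h(x)$ carry factors of $(1-x^2)$ to high enough powers that their \emph{second} derivatives also vanish at $x=1$ (since $g$ has a factor $(1-x^2)^2$ and the last term has $(1-x^2)^3$). So only the first two terms of \eqref{sum} contribute to $W_n''(1)$.

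First I would set $P_m(x) = \prod_{j=1}^{m}[x(x+j)]$ and compute the relevant derivatives of the first term $P_{n-1}(x)$ at $x=1$. Using logarithmic differentiation, $P_{n-1}'(x)/P_{n-1}(x) = \sum_{i=1}^{n-1} \frac{2x+i}{x(x+i)}$, and a further derivative gives $P_{n-1}''$ in terms of this sum and its derivative. At $x=1$ one has $P_{n-1}(1) = \prod_{j=1}^{n-1}(j+1) = n!$, and the logarithmic-derivative sum evaluates to $\sum_{i=1}^{n-1}\frac{i+2}{i+1} = n-1+\sum_{i=1}^{n-1}\frac{1}{i+1} = n + H_n - 2$, exactly as in the mean proof. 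I would then evaluate $\frac{d}{dx}\sum_i \frac{2x+i}{x(x+i)}$ at $x=1$, which produces terms like $\sum_i \frac{2}{x(x+i)}$, $-\sum_i \frac{2x+i}{x^2(x+i)}$ and $-\sum_i \frac{2x+i}{x(x+i)^2}$; at $x=1$ these collapse into expressions involving $H_n$ and $\sum_{j=1}^n 1/j^2$.

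Next I would handle the second term $(n-1)(1-x^2)P_{n-2}(x)$. Its second derivative at $x=1$ survives because $(1-x^2)$ appears only to the first power: by the product rule the surviving contributions come from pairing the first derivative of $(1-x^2)$ (which is $-2x$, equal to $-2$ at $x=1$) with the first derivative of $P_{n-2}$, plus the second derivative of $(1-x^2)$ (equal to $-2$) times $P_{n-2}(1)$. Here $P_{n-2}(1) = \prod_{j=1}^{n-2}(j+1) = n!/n = (n-1)!$, and $P_{n-2}'(1)/P_{n-2}(1) = \sum_{i=1}^{n-2}\frac{i+2}{i+1}$. Assembling $W_n''(1)$ from these pieces, dividing by $n!$, and substituting into \eqref{variance} together with the known $\mu_n = n + H_n - 4 + 2/n$ should, after the large polynomial-in-$n$ terms cancel against $\mu_n^2$ and $\mu_n$, leave precisely $H_n + \frac{4}{n} - \frac{8}{n^2} - \sum_{j=1}^n \frac{1}{j^2}$.

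\textbf{The main obstacle} I anticipate is the bookkeeping in the cancellation step: $W_n''(1)/n!$ will contain a leading $n^2$-order polynomial contribution (roughly $(n + H_n - 2)^2$ from the first term), and this must cancel almost entirely against $\mu_n^2 = (n + H_n - 4 + 2/n)^2$ when forming $\sigma_n^2 = W_n''(1)/n! - \mu_n(\mu_n-1)$. Getting every $H_n$-linear and every $1/n$ term to cancel correctly requires careful tracking, and this is where a sign error or a dropped term is most likely; I would verify the final expression against small cases such as $n=2,3$ computed directly from the $W_{n,k}$ table. A secondary technical point is confirming rigorously that the $g(x)$ and $h(x)$ terms contribute nothing to $W_n''(1)$, which follows immediately from the multiplicity of their $(1-x^2)$ factors, and handling the boundary cases $n=1,2$ separately since \eqref{sum} presupposes $n\geq 3$.
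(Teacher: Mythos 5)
Your overall strategy is the paper's own: differentiate the decomposition \eqref{sum} twice, evaluate at $x=1$, and feed the result into \eqref{variance}. However, there is a genuine error at precisely the step you describe as following immediately: you claim that $g''(1)=0$ because $g$ carries a factor $(1-x^2)^2$. That is false. A factor $(1-x^2)^2$ guarantees $g(1)=0$ and $g'(1)=0$ --- which is why $g$ could be ignored in the computation of the mean --- but it does not kill the second derivative. Writing $g(x)=\frac{(n-1)(n-2)}{2}(1-x^2)^2P(x)$ with $P(x)=\prod_{j=1}^{n-3}[x(x+j)]$, the Leibniz rule gives
\begin{equation*}
g''(1)=\frac{(n-1)(n-2)}{2}\,\bigl[(1-x^2)^2\bigr]''\big|_{x=1}\,P(1)
=\frac{(n-1)(n-2)}{2}\cdot 8\cdot (n-2)! = 4(n-2)(n-1)!,
\end{equation*}
since $\bigl[(1-x^2)^2\bigr]''=-4+12x^2$ equals $8$ at $x=1$ and $P(1)=(n-2)!$. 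Only the $(1-x^2)^3h(x)$ term has vanishing second derivative at $x=1$; the paper computes this $g''(1)$ explicitly and injects it into $W_n''(1)$.

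This is not a minor bookkeeping issue: $g''(1)/n! = 4(n-2)/n = 4-\frac{8}{n}$, so omitting it makes the final answer wrong by exactly $4-\frac{8}{n}$; in particular the terms $\frac{4}{n}-\frac{8}{n^2}$ in \eqref{formula_var} cannot be recovered without it. Your plan to verify against small cases $n=2,3$ would flag the discrepancy, but the proof as outlined would not produce the stated formula. The rest of your outline --- logarithmic differentiation of $\prod_{j=1}^{n-1}[x(x+j)]$, the treatment of the $(n-1)(1-x^2)\prod_{j=1}^{n-2}[x(x+j)]$ term, and the cancellation against $\mu_n(\mu_n-1)$ --- matches the paper's computation and is sound.
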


\begin{proof}
We evaluate the second derivative of $W_n(x)$ at $x=1$.
We first note that we can rewrite the previous expression for $W_n'(x)$ as
\begin{align*}
W_n'(x) &=  \left\{ \sum_{i=1}^{n-1}  \frac{2x+i}{x(x+i)} -\frac{2(n-1)}{x+n-1} \right\} \prod_{j=1}^{n-1}[x (x+j)]\\
& \ \ \ +(n-1)(1-x^2)\sum_{i=1}^{n-2}  \frac{2x+i}{x(x+i)} \prod_{j=1}^{n-2}[x (x+j)] + g'(x)+  [(1-x^2)^3 h(x)]'.
\end{align*}
Using the fact that $$\frac{2x+i}{x(x+i)} = \frac{1}{x} + \frac{1}{x+i}$$ and derivating a second time, we obtain
\begin{align*}
W_n''(x) &=  \left\{ -\sum_{i=1}^{n-1} \left(\frac{1}{x^2}+\frac{1}{(x+i)^2}  \right)+\frac{2(n-1)}{(x+n-1)^2} \right\} \prod_{j=1}^{n-1} [x(x+j)] \\
& \ \ \ +  \left\{ \sum_{i=1}^{n-1}  \left( \frac{1}{x} + \frac{1}{x+i} \right) -\frac{2(n-1)}{x+n-1} \right\} \sum_{k=1}^{n-1}   \left( \frac{1}{x} + \frac{1}{x+k} \right) \prod_{j=1}^{n-1} [x(x+j)]\\
& \ \ \ -2(n-1)x  \sum_{i=1}^{n-2}  \left( \frac{1}{x} + \frac{1}{x+i} \right) \prod_{j=1}^{n-2}[x (x+j)] + (1-x^2)u(x)\\
& \ \ \ + g''(x) +  [(1-x^2)^3 h(x)]'',
\end{align*}
where $u(x)$ is some polynomial function.
Since $ [(1-x^2)^3 h(x)]''$ vanishes when $x=1$, we have
\begin{align*}
W_n''(1) 
&=  \left\{ -\sum_{i=1}^{n-1} \left(1+\frac{1}{(1+i)^2}  \right)+\frac{2(n-1)}{n^2} \right\} \prod_{j=1}^{n-1} (1+j) \\
& \ \ \ +  \left\{ \sum_{i=1}^{n-1}  \left(1 + \frac{1}{1+i} \right) -\frac{2(n-1)}{n} \right\} \sum_{k=1}^{n-1}   \left( 1 + \frac{1}{1+k} \right) \prod_{j=1}^{n-1} (1+j)\\
& \ \ \ -2(n-1)  \sum_{i=1}^{n-2}  \left( 1 + \frac{1}{1+i} \right) \prod_{j=1}^{n-2}(1+j).
\end{align*}
Replacing $\prod_{j=1}^{n-1}(1+j)$ with $n!$ and $\sum_{i=1}^{n-1}  \left(1 + \frac{1}{1+i}\right)$ with $n+H_n-2$ 
yields
\begin{align*}
W_n''(1) &= n! \left\{- n + 2  + \frac{2}{n} - \frac{2}{n^2} - \sum_{j=1}^n \frac{1}{j^2} + \left(n + H_n - 4  + \frac{2}{n}\right) (n+H_n-2) \right\}\\
& \ \ \ - 2(n-1)(n-1)!\left(n+H_n-3-\frac{1}{n}\right)+ g''(1).
\end{align*}

We must now compute $g''(1)$. We have, with $f(x)$ being  some polynomial function:
$$g'(x)=2(n-1)(n-2)\left\{ -(1-x^2)x^{n-2}  \prod_{j=1}^{n-3} (x+j) + (1-x^2)^2 f(x)\right\}.$$
When derivating a second time and taking $x=1$ we obtain
$$g''(1)=4(n-2)(n-1)!.$$
Injecting this expression in the previous formula for $W_n''(1)$ and dividing by $n!$ gives
\begin{align*}
\frac{W''_n(1)}{n!}&= - n + 2  + \frac{2}{n} - \frac{2}{n^2} - \sum_{j=1}^n \frac{1}{j^2} + \left(n + H_n - 4  + \frac{2}{n}\right) \left(n+H_n-2 \right)\\
& \ \ \ - 2\left(1-\frac{1}{n}\right) \left(n+H_n-3-\frac{1}{n}\right)+4-\frac{8}{n}.
\end{align*}

Using \Cref{variance}, we obtain that the variance of the distance distribution equals
\begin{align*}
\sigma_n^2 &=  - n + 6  - \frac{6}{n} - \frac{2}{n^2} - \sum_{j=1}^n \frac{1}{j^2} + \left(n + H_n - 4  + \frac{2}{n}\right) \left(n+H_n-2 \right)\\
& \ \ \ - 2\left(1-\frac{1}{n}\right) \left(n+H_n-3-\frac{1}{n}\right) -  \left(n + H_n - 4 + \frac{2}{n}\right) \left(n + H_n - 5 +\frac{2}{n}\right)\\
& =  - n + 6  - \frac{6}{n} - \frac{2}{n^2} - \sum_{j=1}^n \frac{1}{j^2} + \left(n + H_n - 4  + \frac{2}{n}\right) \left(3-\frac{2}{n}\right)\\
& \ \ \ - 2\left(1-\frac{1}{n}\right) \left(n+H_n-3-\frac{1}{n}\right),
\end{align*}
which finally gives the formula in \Cref{formula_var}.
\end{proof}

\section{Asymptotic behaviour of the distance distribution}

We now study asymptotic properties of the prefix exchange distance distribution, namely, the value of its mean and its variance as well as its convergence as $n\to\infty$.

\begin{proposition}\label{asymptotics}
We have the following asymptotics for the mean and the variance of the prefix exchange distribution when $n$ is large:
\begin{eqnarray*}
\mu_n &=& n + \log n + \gamma - 4 +o(1);\\
\sigma_n^2 & =&  \log n + \gamma - \frac{\pi^2}{6}+o(1),
\end{eqnarray*}
where $\gamma \approx 0.577$ is the Euler-Mascheroni constant and $o(1)$ denotes a sequence converging to 0 as $n \to \infty$.
\end{proposition}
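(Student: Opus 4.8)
The plan is to read off both asymptotics directly from the exact closed forms already at our disposal, namely $\mu_n = n + H_n - 4 + 2/n$ from \Cref{formula_mean} and $\sigma_n^2 = H_n + 4/n - 8/n^2 - \sum_{j=1}^n 1/j^2$ from \Cref{formula_var}. No fresh combinatorics is required: the whole statement reduces to substituting two classical analytic estimates and collecting the negligible terms. The two estimates I would invoke are the asymptotic expansion of the harmonic numbers, $H_n = \log n + \gamma + \tfrac{1}{2n} + O(1/n^2)$, and hence in particular $H_n = \log n + \gamma + o(1)$; and the Basel evaluation $\sum_{j=1}^\infty 1/j^2 = \pi^2/6$ together with the tail bound $\sum_{j=n+1}^\infty 1/j^2 = O(1/n)$ (obtained by comparison with $\int_n^\infty t^{-2}\,dt$).

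For the mean, I would substitute $H_n = \log n + \gamma + o(1)$ into \Cref{formula_mean}. The remaining terms $2/n$ and the subleading contributions of $H_n$ are each $O(1/n)$, hence $o(1)$, so they may all be absorbed, yielding
\begin{equation*}
\mu_n = n + \log n + \gamma - 4 + o(1),
\end{equation*}
which is exactly the claimed expansion.

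For the variance, the one extra ingredient is the behaviour of the partial Basel sum. From the tail bound above one gets $\sum_{j=1}^n 1/j^2 = \pi^2/6 - O(1/n) = \pi^2/6 + o(1)$. Substituting this and $H_n = \log n + \gamma + o(1)$ into \Cref{formula_var}, and noting that $4/n$ and $-8/n^2$ are again $o(1)$, gives
\begin{equation*}
\sigma_n^2 = \log n + \gamma - \frac{\pi^2}{6} + o(1),
\end{equation*}
as required.

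There is no genuine obstacle here; the only point that warrants a word of care is that every correction being discarded is polynomially small in $n$ (each is $O(1/n)$), so gathering them into a single $o(1)$ is legitimate. If one wished to be more precise, the same computation with the full expansions $H_n = \log n + \gamma + \tfrac{1}{2n} + O(1/n^2)$ and $\sum_{j=1}^n 1/j^2 = \pi^2/6 - \tfrac1n + O(1/n^2)$ would furnish explicit $O(1/n)$ error terms in place of $o(1)$, but the stated form of \Cref{asymptotics} requires only the weaker $o(1)$ bookkeeping.
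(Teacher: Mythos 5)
Your proposal is correct and follows essentially the same route as the paper: the paper's proof is likewise an immediate substitution of $H_n = \log n + \gamma + o(1)$ and $\sum_{j=1}^{\infty} 1/j^2 = \pi^2/6$ into \Cref{formula_mean} and \Cref{formula_var}, absorbing the $O(1/n)$ terms into $o(1)$. Your extra remarks on the tail bound for the Basel sum and on refining $o(1)$ to explicit $O(1/n)$ errors are sound but not needed for the stated result.
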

\begin{proof}
Immediate from \Cref{formula_mean,formula_var}, using the well-known results (see, e.g., \citet{DBLP:books/daglib/0068021}):
$$H_n - \log n \longrightarrow \gamma \text{ when } n \to \infty$$
and
$$\sum_{n=1}^{\infty} \frac{1}{n^2} = \frac{\pi^2}{6}.$$
\end{proof}

We further show that, for large $n$, the distribution of the prefix exchange distance for a uniform random permutation $\pi \in \Sym{n}$ is approximately normal, with mean $\mu_n$ and variance $\sigma_n^2$. More precisely, we prove the following:
\begin{theorem}\label{normal}
The normalised prefix exchange distance 
for a uniform random permutation $\pi \in \Sym{n}$, i.e.,
$$
D_n =  \frac{pexc(\pi)- \mu_ n}{\sigma_n}
$$
converges in distribution, when $n \to \infty$, to the standard normal distribution $\mathcal{N}(0,1)$, which means that
$$\mathbb{P}(a < D_n < b) \longrightarrow \frac{1}{\sqrt{2\pi}}\int_{a}^{b} e^{-x^2/2} dx$$
when $n \to \infty$, for every real numbers $a < b$.
\end{theorem}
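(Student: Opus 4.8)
The plan is to reduce the asymptotic normality of $pexc(\pi)$ to the classical asymptotic normality of the number of cycles of a uniform random permutation, using the exact formula of \Cref{formula_dist}. Write $C_n = c_1(\pi)+c_{\geq 2}(\pi)$ for the total number of cycles and $F_n = c_1(\pi)$ for the number of fixed points. Since $c_{\geq 2}(\pi)-c_1(\pi) = C_n - 2F_n$, \Cref{form} reads
\[
pexc(\pi) = n + C_n - 2F_n - 2\cdot\mathbf{1}[\pi_1 \neq 1].
\]
The key observation is that, after centering and dividing by $\sigma_n$, the fluctuating term $C_n$ dominates, whereas the bounded quantities $F_n$ and $\mathbf{1}[\pi_1\neq 1]$ become negligible.

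First I would collect the standard facts about a uniform random $\pi \in \Sym{n}$. Because $\firststirlingnumber{n}{k}$ counts permutations with $k$ cycles, \Cref{eqn:ascending-factorial} gives the probability generating function $\mathbb{E}[x^{C_n}] = \risingfactorial{x}{n}/n! = \prod_{j=1}^{n}\tfrac{j-1+x}{j}$, so that $C_n$ has the law of a sum $\sum_{j=1}^n B_j$ of independent Bernoulli variables with $\mathbb{P}(B_j=1)=1/j$. In particular $\mathbb{E}[C_n]=H_n$ and $v_n := \mathrm{Var}(C_n) = H_n - \sum_{j=1}^n 1/j^2 \to \infty$. From this representation the standardised count is asymptotically normal, $(C_n - H_n)/\sqrt{v_n} \to \mathcal{N}(0,1)$: this follows from the Lyapunov central limit theorem, since $\sum_{j=1}^n \mathbb{E}|B_j - 1/j|^3 = O(\log n)$ while $v_n^{3/2} \sim (\log n)^{3/2}$, so the Lyapunov ratio tends to $0$. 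Finally, $F_n$ has $\mathbb{E}[F_n]=1$ and $\mathrm{Var}(F_n)=1$ (for $n\geq 2$), and $\mathbf{1}[\pi_1\neq 1]\in\{0,1\}$, so both are bounded in $L^2$ uniformly in $n$.

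Next I would center and normalise. As $\mu_n = \mathbb{E}[pexc(\pi)]$, the displayed identity yields
\[
D_n = \frac{C_n - \mathbb{E}[C_n]}{\sigma_n} + R_n, \qquad R_n = \frac{-2\bigl(F_n - \mathbb{E}[F_n]\bigr) - 2\bigl(\mathbf{1}[\pi_1\neq 1] - \mathbb{E}\,\mathbf{1}[\pi_1\neq 1]\bigr)}{\sigma_n}.
\]
Chebyshev's inequality together with the uniform $L^2$ bounds gives $\mathbb{E}[R_n^2] = O(1/\sigma_n^2)\to 0$, hence $R_n\to 0$ in probability. Moreover \Cref{formula_var} shows $\sigma_n^2 = v_n + 4/n - 8/n^2$, so $\sqrt{v_n}/\sigma_n \to 1$; combining the asymptotic normality of $(C_n-H_n)/\sqrt{v_n}$ with Slutsky's theorem gives $(C_n-\mathbb{E}[C_n])/\sigma_n \to \mathcal{N}(0,1)$. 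A last application of Slutsky's theorem to $D_n = (C_n-\mathbb{E}[C_n])/\sigma_n + R_n$ then yields $D_n \to \mathcal{N}(0,1)$, which is the claim.

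The only substantial ingredient is the asymptotic normality of the number of cycles; the rest is bookkeeping. If a self-contained argument is preferred over citing this classical result, the independent-Bernoulli representation together with the Lyapunov (or Lindeberg) condition provides it directly, the verification being easy because $v_n$ grows like $\log n$. The one point requiring a little care is that $F_n$ and $\mathbf{1}[\pi_1\neq 1]$ must be shown to be genuinely negligible after scaling, rather than merely of smaller variance; their uniform $L^2$-boundedness makes this immediate through Chebyshev's inequality, since $\sigma_n \to \infty$.
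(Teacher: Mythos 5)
Your proof is correct, but it takes a genuinely different route from the paper's. The paper works analytically: it applies L\'evy's continuity theorem to the characteristic function $\varphi_n(t)=e^{-it\mu_n/\sigma_n}W_n(e^{it/\sigma_n})/n!$, using the explicit generating function \Cref{gen_fct}, isolating the dominant $k=n-1$ term, and invoking asymptotics for the ratio $\Gamma(n+e^{it/\sigma_n})/n!$ of Gamma functions. You instead argue probabilistically: from \Cref{form} you write $pexc(\pi)=n+C_n-2F_n-2\cdot\mathbf{1}[\pi_1\neq 1]$, observe that the cycle count $C_n$ is the only unbounded fluctuation, and transfer the classical Goncharov-type CLT for $C_n$ (via the factorisation $\mathbb{E}[x^{C_n}]=\prod_{j=1}^n\frac{x+j-1}{j}$ into independent Bernoulli generating functions, plus Lyapunov) to $pexc(\pi)$ by Slutsky, after checking that $F_n$ and the indicator are uniformly bounded in $L^2$ while $\sigma_n\to\infty$. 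All the ingredients check out: the identity $c_{\geq 2}-c_1=C_n-2F_n$, the Bernoulli representation, the Lyapunov ratio $O((\log n)^{-1/2})$, $\mathrm{Var}(F_n)=1$ for $n\geq 2$, and the identification $\sigma_n^2=v_n+4/n-8/n^2$ from \Cref{formula_var} (your decomposition even re-derives \Cref{formula_mean} as a by-product, a nice consistency check). What your approach buys is conceptual transparency and economy: it explains \emph{why} normality holds --- the distance differs from the cycle count by an $L^2$-bounded correction, and cycle counts are sums of independent indicators --- and it avoids Gamma-function asymptotics entirely; indeed one could even dispense with \Cref{formula_var}, since $|\sigma_n-\sqrt{v_n}|\leq 3$ follows from the triangle inequality in $L^2$. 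What the paper's approach buys is self-containedness relative to its own machinery: having already computed $W_n(x)$, it needs no external probabilistic input such as the CLT for cycle counts, and the characteristic-function computation exhibits exactly how the generating function's structure produces the Gaussian limit.
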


\begin{remark}
Using the asymptotics for $\mu_n$ and $\sigma_n^2$ derived in \Cref{asymptotics}, the above convergence is equivalent to the following convergence in distribution
$$\frac{pexc(\pi)- n-\log n}{\sqrt{\log n}}   \longrightarrow \mathcal{N}(0,1) \text{ when } n \to \infty,$$
which means that the distribution of the prefix exchange distance for a uniform random permutation $\pi \in \Sym{n}$ is asymptotically normal, with mean $n+\log n$ and variance $\log n$.
\end{remark}

\begin{proof}[\textit{Proof of \Cref{normal}}.]
We will show that the sequence of characteristic functions of the random variables $(D_n)_{n \geq 1}$ converges pointwise, when $n \to \infty$, to the characteristic function of the standard normal distribution, given by $\varphi(t)=e^{-t^2/2}$.
L\'evy's convergence theorem (see e.g. \citet{Billingsley}) 

will then imply that the sequence $(D_n)_{n \geq 1}$ converges in distribution to the standard normal distribution $\mathcal{N}(0,1)$.

Let $\varphi_n$ denote the characteristic function of the random variable $D_n$, defined for $t \in \mathbb{R}$ by $\varphi_n(t)=\mathbb{E}(e^{itD_n})$.
We have

$$\varphi_n(t)= e^{-\frac{it\mu_n}{\sigma_n}}   \sum_{k=0}^{\infty} e^{\frac{itk}{\sigma_n}} \mathbb{P}(pexc(\pi)=k).$$
Since $\pi$ in chosen uniformly at random in $\Sym{n}$, we have $$ \mathbb{P}(pexc(\pi)=k)= \frac{W_{n,k}}{n!},$$ which yields
\begin{equation}\label{phi}
\varphi_n(t) 
= e^{-\frac{it\mu_n}{\sigma_n}}  \frac{W_n(e^{\frac{it}{\sigma_n}})}{n!},
\end{equation}
where $W_n(\cdot)$ is the generating function obtained in \Cref{gen_fct}.
For every $x \in \mathbb{C}$, \Cref{gen_fct} reads
\begin{equation}\label{Wn(x)}
W_n(x) = \sum_{k=0}^{n-1} \frac{(n-1)!}{(n-k-1)!k!} (1-x^2)^{n-1-k} x^k \prod_{j=1}^k (x+j).
\end{equation}

\noindent \Cref{phi,Wn(x)} then yield, for any $t \in \mathbb{R}$:
$$ 
\varphi_n(t) = e^{-\frac{it\mu_n}{\sigma_n}} \frac{1}{n} \sum_{k=0}^{n-1}
e^{\frac{itk}{\sigma_n}}  \frac{(1-e^{\frac{2it}{\sigma_n}})^{n-k-1}}{(n-k-1)!}   \frac{ \prod_{j=1}^k (e^{\frac{it}{\sigma_n}}+j)}{k!}.
$$

We will show that the dominant term is obtained for $k=n-1$ and converges to $e^{-t^2/2}$ when $n \to \infty$, while all other terms vanish at the limit. To that end, let us isolate in $\varphi_n(t)$ the term obtained for $k=n-1$ (which we denote $A_n$) and let $R_n$ denote the sum of all other terms; we obtain:
\begin{equation}\label{formula_phi}
\varphi_n(t) = A_n + R_n,
\end{equation}
with
\begin{equation}\label{A_n}
A_n = \exp\left(-\frac{it(\mu_n - n+1)}{\sigma_n}\right) \frac{ \prod_{j=1}^{n-1} (e^{\frac{it}{\sigma_n}}+j)}{n!}
\end{equation}
and
$$
|R_n| \leq  \frac{1}{n} \sum_{k=0}^{n-2}
 \frac{|1-e^{\frac{2it}{\sigma_n}}|^{n-k-1}}{(n-k-1)!}   \frac{ \prod_{j=1}^k (1+j)}{k!} = \sum_{k=0}^{n-2} \frac{k+1}{n} 
 \frac{|1-e^{\frac{2it}{\sigma_n}}|^{n-k-1}}{(n-k-1)!},
$$
using the fact that $|e^{ix}| = 1$, for  $x \in \mathbb{R}$.

Let us first show that $R_n$ converges to $0$ when $n \to \infty$. Setting $j=n-k-1$ in the above inequality, we obtain:
$$
|R_n| \leq  \sum_{j=1}^{n-1} 
 \frac{|1-e^{\frac{2it}{\sigma_n}}|^j}{j!}  \leq   \sum_{j=1}^{\infty} 
 \frac{|1-e^{\frac{2it}{\sigma_n}}|^j}{j!} =
 \exp( |1-e^{\frac{2it}{\sigma_n}}|) - 1,
$$ 
using the MacLaurin series, and therefore $R_n \longrightarrow 0$ as $n \to \infty$.

To show that $\varphi_n(t) \longrightarrow e^{-t^2/2}$, in light of \Cref{formula_phi}, we must check that
$$ 
A_n= \exp\left(-\frac{it(\mu_n - n+1)}{\sigma_n}\right)  \frac{ \prod_{j=1}^{n-1} (e^{\frac{it}{\sigma_n}}+j)}{n!} \longrightarrow  e^{-t^2/2}.
$$

Note that the product $\prod_{j=1}^{n-1} (e^{\frac{it}{\sigma_n}}+j)$ can be written as a ratio of two Gamma functions.
We recall that for $z \in \mathbb{C}$ with $\mathrm{Re}(z)>0$, the Gamma function is defined as
$$\Gamma(z) = \int_{0} ^{\infty} x^{z-1} e^{-x} dx.$$
By integration by parts, it is easy to see that the Gamma function satisfies the recurrence relation $\Gamma(z+1)=z\Gamma(z)$, which implies, in particular, that for $n \in \mathbb{N}^*$ we have $\Gamma(n) = (n-1)!$.

The same recurrence relation allows us to write: 
$$ 
\prod_{j=1}^{n-1} (e^{\frac{it}{\sigma_n}}+j) = \frac{\Gamma(n+e^{\frac{it}{\sigma_n}})}{\Gamma(e^{\frac{it}{\sigma_n}})},$$
for $n$ sufficiently large to have $\mathrm{Re}(e^{\frac{it}{\sigma_n}} )> 0$. 

We further use the following asymptotic approximation (see e.g. \citet{tricomi}):
$$
\frac{\Gamma(n+e^{ix})}{n!} = n^{e^{ix}-1}(1 + o(1)),
$$
for $x \in \mathbb{R}$ and $n =2, 3, \ldots$,
to deduce
$$
 \frac{ \prod_{j=1}^{n-1} (e^{\frac{it}{\sigma_n}}+j)}{n!} =  \frac{\Gamma(n+e^{\frac{it}{\sigma_n}})}{n!\Gamma(e^{\frac{it}{\sigma_n}})} = \frac{ n^{e^{\frac{it}{\sigma_n}}-1}}{\Gamma(e^{\frac{it}{\sigma_n}})}(1+o(1)).
$$
As a consequence, and based on the asymptotic approximation of $\mu_n$ from \Cref{asymptotics}, we deduce from \Cref{A_n} that for $n \to \infty$:
\begin{equation}\label{An}
A_n = \exp\left(-\frac{it \log n}{\sigma_n}\right)  \frac{ n^{e^{\frac{it}{\sigma_n}}-1}}{\Gamma(e^{\frac{it}{\sigma_n}})}(1+o(1)).
\end{equation}

We further write
$$ n^{e^{\frac{it}{\sigma_n}}-1} =\exp\left(\log \left( n^{e^{\frac{it}{\sigma_n}}-1}\right) \right)= \exp\left( (e^{\frac{it}{\sigma_n}}-1) \log n  \right) .$$
The second order series expansion of the exponential, together with the asymptotic approximation of $\sigma_n^2$ from \Cref{asymptotics} yield 

$$e^{\frac{it}{\sigma_n}}-1 = \frac{it}{\sigma_n}- \frac{t^2}{2\sigma_n^2} + o\left(\frac{1}{\log n}\right),$$
and
$$n^{e^{\frac{it}{\sigma_n}}-1} = \exp\left(\frac{it \log n}{\sigma_n}-\frac{t^2}{2}\right)(1+o(1)).$$

Further replacing in \Cref{An} implies
$$
A_n =  \exp\left(-\frac{t^2}{2}\right)(1+o(1)).
$$
We have also used the fact that, by continuity, the denominator $\Gamma(e^{\frac{it}{\sigma_n}})$ converges to $\Gamma(1)=1$ as $n \to \infty$. 
Since $\varphi_n(t) = A_n + R_n$ and $R_n \longrightarrow 0$, it finally follows that $\varphi_n(t)$ converges to $e^{-t^2/2}$ as $n \to \infty$, which ends the proof.

\end{proof}

\section*{Acknowledgements}

The first author wishes to thank Fr\'ed\'eric Protin for very helpful discussions. We also wish to thank the Centre International de Rencontres Math\'ematiques (CIRM) in Marseille, who offered us the opportunity to work together during a ``research in pairs'' stay, where part of this work was performed.

\section*{References}

\bibliographystyle{elsarticle-harv}
\bibliography{prefix-exchanges-comb}

\end{document}